\newtheorem{theorem}{Theorem}[section]
\newtheorem{lemma}{Lemma}[section]
\newtheorem{corollary}{Corollary}[section]
\newtheorem{proposition}{Proposition}[section]
\newtheorem{conjecture}{Conjecture}[section]
\theoremstyle{definition}
\theoremstyle{remark}
\newtheorem{remark}{Remark}[section]
\numberwithin{equation}{section}
\newcommand{\exc}{{\rm exc\,}}
\title{Strong $q$-log-convexity of the Eulerian polynomials of Coxeter groups
\footnote{This work was supported in part by the National Natural Science
Foundation of China (Nos. 11201260, 11201191), the Key Project of
Chinese Ministry of Education (No. 212098), the Specialized
Research Fund for the Doctoral Program of Higher Education of China
(No. 20113705120002), and the National Science Foundation of
Shandong Province of China (No. ZR2011AL018).
\newline\hspace*{5mm}
 {\it Email addresses:} $^1$ liulily@mail.qfnu.edu.cn (L.L.
 Liu),  $^2$ bxzhu@jsnu.edu.cn (B.-X.
 Zhu)}}
\author{Lily Li Liu$^1$,\quad Bao-Xuan Zhu$^2$}
\date{\footnotesize $^1$ School of Mathematical Sciences, Qufu Normal University, Qufu 273165, PR China\\
$^2$ School of Mathematical Sciences, Jiangsu Normal University,
Xuzhou 221116, PR China}
\begin{document}
\maketitle

\begin{abstract}
In this paper we prove the strong $q$-log-convexity of the Eulerian
polynomials of Coxeter groups using their exponential generating
functions. Our proof is based on the theory of exponential Riordan
arrays and a criterion for determining the strong $q$-log-convexity
of polynomials sequences, whose generating functions can be given by
the continued fraction. As consequences, we get the strong
$q$-log-convexity of the Eulerian polynomials of types $A_n,B_n$,
their $q$-analogous and the generalized Eulerian polynomials
associated to the arithmetic progression
$\{a,a+d,a+2d,a+3d,\ldots\}$ in a unified manner.
\bigskip\\
{\sl MSC:}\quad 05A20; 05A15; 30F70
\bigskip\\
{\sl Keywords:}\quad Eulerian polynomials; Coxeter groups; Strong
$q$-log-convexity; Riordan array; Continued fraction
\end{abstract}

\section{Introduction}
\hspace*{\parindent}
The Eulerian polynomials $P(W,q)$, which enumerate the number of
descents of a (finite) Coxeter group $W$, is one of the classical
polynomials in combinatorics. During their long history, they arised
often in combinatorics and were extensively studied
(see~\cite{Bre94,Car59,Car78,Com74} and references therein). In
recent years, there has been a considerable amount of interesting
extensions and modifications devoted to these polynomials
(see~\cite{Bar11-1,Bar13,Cho,LW07-1,LW07-2,SV,WY05,Zhu} for
instance). In fact, Brenti showed that it is enough to study the
Eulerian polynomials for irreducible Coxeter groups~\cite{BB,Bre94}.
For Coxeter groups of type $A_n$, it is known that these polynomials
coincide with the classical Eulerian polynomials, whose properties
have been well studied from a combinatorial point of
view~\cite{Com74,FS,LW07-1,LW07-2,WY05}. Some properties of the
classical Eulerian polynomials can be generalized to the Eulerian
polynomials of type $B_n$, such as recurrence relations, the reality
of zeros, generating functions, unimodality and total positivity
properties~\cite{Bar11-1,Bre94,CG,SV}. In this paper, using their
exponential generating functions, we present the strong
$q$-log-convexity of many Eulerian polynomials of Coxeter groups,
which on one hand also generalizes the strong $q$-log-convexity of
the classical Eulerian polynomials~\cite{Zhu}, on the other hand
will give the strong $q$-log-convexity of types $A_n,B_n$, their
$q$-analogues and the generalized Eulerian polynomials associated to
the arithmetic progression $\{a,a+d,a+2d,a+3d,\ldots\}$ in a unified
manner.

Let $q$ be an indeterminate. For two real polynomials $f(q)$ and $g(q)$, denote $f(q)\geqslant_q
g(q)$ if the difference $f(q)-g(q)$ has only nonnegative
coefficients as a polynomial of $q$. We say that a real
polynomial sequence $\{f_n(q)\}_{n\geqslant0}$ is called {\it
$q$-log-convex} if
$$f_{n-1}(q)f_{n+1}(q)\geqslant_qf_n^2(q)$$ for $n\geqslant1$, and it is {\it strongly $q$-log-convex} if
$$f_{m-1}(q)f_{n+1}(q)\geqslant_qf_m(q)f_n(q)$$ for all $n\geqslant m\geqslant1$.
Clearly, the strong $q$-log-convexity of
polynomials sequences implies the $q$-log-convexity. However, the converse dose not follows.

As we know that many famous polynomials sequences, such as the Bell
polynomials~\cite{CWY,LW07-2}, the classical Eulerian
polynomials~\cite{LW07-2,Zhu}, the Narayana
polynomials~\cite{CWY2010-2}, the Narayana polynomials of type
$B$~\cite{CWY2010-1} and the Jacobi-Stirling
numbers~\cite{LZ,Zhu13}, are $q$-log-convex. Furthermore, almost all
of these polynomials sequences are strongly
$q$-log-convex~\cite{CWY,LZ,Zhu}. In this paper we give the strong
$q$-log-convexity of many Eulerian polynomials. Our
proof relies on the theory of exponential Riordan arrays and a
criterion of Zhu~\cite{Zhu} for determining the strong
$q$-log-convexity of polynomials sequences, whose generating
functions can be given by the continued fraction.

This paper is organized as follows. In section 2, using the theory
of exponential Riordan arrays and orthogonal polynomials, we first
give the continued fraction of the ordinary generating function of
the polynomials sequence, whose  exponential generating function
generalizes the exponential generating function of many Eulerian
polynomials. Then we obtain the strong $q$-log-convexity of the
polynomials sequence using the continued fraction and a criterion of
Zhu~\cite{Zhu}. As applications, we obtain the strong
$q$-log-convexity of the Eulerian polynomials of Coxeter groups,
including the Eulerian polynomials of types $A_n$, $B_n$, their
$q$-analogues defined by Foata and Sch\"utzenberger~\cite{FS} and
Brenti~\cite{Bre94} respectively, and the generalized Eulerian
polynomials associated to the arithmetic progression
$\{a,a+d,a+2d,a+3d,\ldots\}$~\cite{XTH} in a unified manner in
section 3. In section 4, we present some conjectures and open
problems. Finally, in the Appendix, we can obtain a quick
introduction to the exponential Riordan arrays and the orthogonal
polynomials used in this paper.

\section{The strong $q$-log-convexity}
\hspace*{\parindent} In this section, we first give the continued
fraction of the ordinary generating function of the polynomials
sequence $\{T_n(q)\}_{n\geqslant0}$, whose exponential generating
function generalizes the exponential generating functions of many
Eulerian polynomials. Then using the continued fraction and a
criterion of Zhu~\cite{Zhu}, we prove the strong $q$-log-convexity
of the polynomials sequence $\{T_n(q)\}_{n\geqslant0}$.

\begin{theorem}\label{eo}
Suppose that the exponential generating function of the polynomials
sequence $\{T_n(q)\}_{n\geqslant0}$ has the following simple
expression
\begin{equation}\label{exponential}g(x)=\sum_{n\geqslant
0}\frac{T_n(q)}{n!}x^n=\left(\frac{(1-q)e^{a(1-q)x}}{1-q
e^{d(1-q)x}}\right)^b,\end{equation} for $a,b,d\in\mathrm{R}$. Then
the ordinary generating function of $\{T_n(q)\}_{n\geqslant0}$ can
be given by the continued fraction
\begin{equation}\label{ordinary}
h(x)=\sum_{n\geqslant
0}T_n(q)x^n=\frac{1}{1-s_0(q)x-\cfrac{t_1(q)x^2}{1-s_1(q)x-\cfrac{t_2(q)x^2}{1-s_2(q)x-\cfrac{t_3(q)x^2}{1-s_3(q)x-\cdots}}}},
\end{equation}
where
\begin{equation}\label{s-t}s_i(q)=(di+ab)+(di+bd-ab)q\ \  {\rm
and}\ \ t_{i+1}(q)=d^2(i+1)(i+b)q\end{equation} for $i\geqslant0.$
\end{theorem}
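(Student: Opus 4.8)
The plan is to realize the sequence $\{T_n(q)\}_{n\ge0}$ as the first column of an exponential Riordan array and to read the continued fraction off the associated tridiagonal production matrix, using the machinery recalled in the Appendix. Concretely, I would introduce the delta series
$$f(x)=\frac{e^{d(1-q)x}-1}{1-q\,e^{d(1-q)x}},$$
which satisfies $f(0)=0$, and consider the exponential Riordan array $[g(x),f(x)]$. Since $g(0)=1$, this array is lower triangular with unit $(0,0)$-entry, and its first column is exactly $(T_n(q))_{n\ge0}$, because that column has exponential generating function $g(x)$. The ordinary generating function $h(x)=\sum_n T_n(q)x^n$ is then the generating function of the first column, and the results on exponential Riordan arrays and orthogonal polynomials collected in the Appendix reduce the problem to computing the production matrix of $[g(x),f(x)]$ and checking that it is tridiagonal.

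The key technical step is to verify that $f$ and $g$ satisfy the pair of differential equations forcing tridiagonality. Writing $E=e^{d(1-q)x}$ and solving $f=(E-1)/(1-qE)$ for $E$ gives $E=(1+f)/(1+qf)$ and $1-qE=(1-q)/(1+qf)$; substituting into $f'$ I expect to obtain the Riccati equation
$$f'(x)=d\,(1+f)(1+qf)=d+d(1+q)f+dq\,f^2,$$
so that $f'$ is a quadratic polynomial in $f$. A parallel computation of the logarithmic derivative should give
$$\frac{g'(x)}{g(x)}=ab+(bd-ab)q+bdq\,f,$$
a linear polynomial in $f$. These two facts are precisely the hypotheses under which the production matrix of $[g,f]$ is tridiagonal.

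Granting those ODEs, I would compute the production matrix directly. Writing $c_k(x)=g(x)f(x)^k/k!$ for the exponential generating function of column $k$, the identities $c_k'=(g'/g)c_k+f'c_{k-1}$ and $f\,c_k=(k+1)c_{k+1}$ turn the two differential equations into the three-term relation
$$c_k'=d\,c_{k-1}+\bigl(ab+(bd-ab)q+d(1+q)k\bigr)c_k+(k+1)\bigl(bdq+dq\,k\bigr)c_{k+1},$$
exhibiting the tridiagonal production matrix with constant superdiagonal entry $P_{i,i+1}=d$, diagonal entry $P_{i,i}=ab+(bd-ab)q+d(1+q)i$, and subdiagonal entry $P_{i+1,i}=(i+1)(bdq+dq\,i)$. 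The correspondence between a tridiagonal production matrix and the continued fraction of the first-column generating function (the moment interpretation via orthogonal polynomials recalled in the Appendix) then yields $s_i(q)$ as the diagonal entry, matching the first formula in \eqref{s-t}, and $t_{i+1}(q)$ as the product of the superdiagonal and the adjacent subdiagonal, namely $t_{i+1}(q)=P_{i,i+1}P_{i+1,i}=d\,(i+1)(bdq+dq\,i)=d^2(i+1)(i+b)q$, matching the second formula in \eqref{s-t}.

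The main obstacle I anticipate is not any single computation but the careful bookkeeping that links the three pieces: the Riccati and linear ODEs, the exact tridiagonal form of the production matrix, and the passage to the J-fraction. In particular I must keep the normalization straight, since here $f'(0)=d\neq1$, and so should confirm that the Appendix results still apply and that the constant superdiagonal $d$ (rather than $1$) feeds correctly into the product $t_{i+1}(q)=P_{i,i+1}P_{i+1,i}$. Once the two ODEs are established and this product is formed, the identification with \eqref{s-t} is immediate.
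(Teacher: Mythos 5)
Your proposal is correct and follows essentially the same route as the paper: realize $\{T_n(q)\}_{n\geqslant0}$ as the first column of the exponential Riordan array $[g(x),f(x)]$, show the production matrix is tridiagonal with diagonal $s_i(q)$, and pass to the J-fraction via the orthogonal-polynomial/moment results of the Appendix. The only difference is cosmetic: the paper normalizes $f(x)=\frac{e^{d(1-q)x}-1}{d[1-qe^{d(1-q)x}]}$ so that $f'(0)=1$ and the superdiagonal is $1$ (computing $r(x)=f'(\bar{f}(x))$ and $c(x)=g'(\bar{f}(x))/g(\bar{f}(x))$ via the explicit compositional inverse), whereas you keep $f'(0)=d$, derive tridiagonality from the Riccati and logarithmic-derivative ODEs, and correctly absorb the constant superdiagonal into the product $t_{i+1}(q)=P_{i,i+1}P_{i+1,i}=d^2(i+1)(i+b)q$, which is justified by the invariance of the first-column generating function under diagonal conjugation of $P$.
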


In order to prove this theorem, we need three lemmas. Using the
theory of the exponential Riordan arrays, the first lemma presents
that the production matrix $P$ of the exponential Riordan array
$L=[g(x),f(x)]$, where $g(x)$ is the exponential generating function
of $\{T_n(q)\}_{n\geqslant0}$ given by (\ref{exponential}), is
tri-diagonal.

\begin{lemma}\label{production}
The production matrix $P$ of the exponential Riordan array
$$L=[g(x),f(x)]=\left[\left(\frac{(1-q)e^{a(1-q)x}}{1-q
e^{d(1-q)x}}\right)^b,\frac{e^{d(1-q)x}-1}{d[1-q
e^{d(1-q)x}]}\right],$$  for $a,b,d\in\mathrm{R}$, is tri-diagonal.
\end{lemma}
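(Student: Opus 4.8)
The plan is to compute the production matrix $P$ of the exponential Riordan array $L = [g(x), f(x)]$ explicitly and verify that all entries outside the three central diagonals vanish. Recall that for an exponential Riordan array, the production matrix is determined by two generating functions $c(x)$ and $r(x)$ (sometimes written $z(x)$ and $a(x)$) via the relation governing how $P$ encodes the recurrence satisfied by the rows of $L$; concretely, $P = L^{-1}\bar{L}$, where $\bar{L}$ is $L$ with its top row deleted and shifted, and the key structural fact (recalled in the Appendix) is that $P$ is tridiagonal precisely when $c(x)$ and $r(x)$ are both polynomials of degree at most $2$ in $x$ after the appropriate change of variable. So the whole lemma reduces to showing that the data $(g,f)$ of this particular array produce such low-degree $c(x)$ and $r(x)$.

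First I would record that $f(x) = \dfrac{e^{d(1-q)x}-1}{d[1 - q e^{d(1-q)x}]}$, and I would compute its compositional inverse $\bar{f}(x)$ together with $f'(x)$. The natural move is to set $u = e^{d(1-q)x}$, so that $f = \dfrac{u-1}{d(1 - qu)}$; solving for $u$ in terms of $f$ gives $u = \dfrac{1 + d f}{1 + d q f}$, which is a ratio of linear functions of $f$ — this is the source of all the low-degree behavior. From here $r(x) = f'(\bar{f}(x))$ and $c(x) = \dfrac{g'(\bar{f}(x))}{g(\bar{f}(x))}$ can each be expressed as rational, indeed polynomial, functions of $x$ once $u$ is eliminated. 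I expect both to come out as explicit quadratics in $x$, with coefficients built from $a$, $b$, $d$, $q$; the log-derivative $g'/g$ is convenient here because $g = \left(\tfrac{(1-q)e^{a(1-q)x}}{1 - q u}\right)^b$ gives $\dfrac{g'}{g} = b\!\left[a(1-q) + \dfrac{d q (1-q) u}{1 - q u}\right]$, already a rational function of $u$ alone.

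The main steps in order are therefore: (i) invert $f$ using the substitution $u = e^{d(1-q)x}$ to get $\bar{f}$ and, more usefully, to express $u$ as a Möbius function of the array variable; (ii) compute $r(x) = f'(\bar f(x))$ and show it is a polynomial of degree $\le 2$; (iii) compute $c(x) = (\log g)'(\bar f(x))$ and show it is a polynomial of degree $\le 1$ (or $\le 2$); and (iv) invoke the Appendix characterization to conclude that $P$ is tridiagonal. The subdiagonal, diagonal, and superdiagonal entries will then be read off from the Taylor coefficients of $c(x)$ and $r(x)$, and I anticipate they will match the $s_i(q)$ and $t_{i+1}(q)$ of Theorem~\ref{eo}, though establishing that exact match is the job of the next lemma rather than this one.

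The hard part will be the bookkeeping in steps (ii) and (iii): one must differentiate $f$ and $g$, substitute $\bar f$, and simplify, and it is easy to get swamped in the algebra because every expression carries the factors $(1-q)$ and the parameter $b$ as an exponent. The conceptual obstacle, however, is purely the degree count — I need to verify that after substituting $\bar f$ the spurious higher-degree terms genuinely cancel, and the cleanest way to guarantee this is to keep everything expressed through the single quantity $u$ (which is linear-fractional in the variable) rather than expanding in $x$ prematurely, so that $c$ and $r$ manifestly become polynomials of the required small degree.
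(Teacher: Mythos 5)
Your proposal follows essentially the same route as the paper's proof: your substitution $u=e^{d(1-q)x}$ with $u=\frac{1+dx}{1+dqx}$ is exactly the paper's explicit inverse $\bar f(x)=\frac{1}{d(1-q)}\ln\!\left(\frac{1+dx}{1+dqx}\right)$, after which one computes $r(x)=f'(\bar f(x))=(1+dx)(1+dqx)$ and $c(x)=(\log g)'(\bar f(x))=b[a+(d-a)q]+bd^2qx$ and reads off tridiagonality from the Appendix formula $p_{i,j}=\frac{i!}{j!}(c_{i-j}+jr_{i-j+1})$. One small correction to your stated criterion: tridiagonality requires $\deg c\leqslant 1$ and $\deg r\leqslant 2$ (a genuinely quadratic $c$ would create a nonzero second subdiagonal), not ``both of degree at most $2$,'' but since $c$ does come out linear here your argument goes through unchanged.
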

\begin{proof}
In order to get the production matrix $P$, it suffices to calculate
$r(x)$ and $c(x)$. Recall that
$$r(x)=f'(\bar{f}(x)),\ \ \ c(x)=\frac{g'(\bar{f}(x))}{g(\bar{f}(x))},$$
where $\bar{f}(x)$ is the compositional inverse of $f(x)$.

By the direct calculation, we have
$$f'(x)=\frac{(1-q)^2e^{d(1-q)x}}{[1-q
e^{d(1-q)x}]^2}.$$ Note that the compositional inverse of $f(x)$
satisfies
$$f(\bar{f}(x))=\frac{e^{d(1-q)\bar{f}}-1}{d[1-q
e^{d(1-q)\bar{f}}]}=x.$$  Then we have
$$\bar{f}(x)=\frac{1}{d(1-q)}\ln\left(\frac{1+dx}{1+dq
x}\right).$$ Hence $$r(x)=f'(\bar{f}(x))=(1+dx)(1+dq
x)=1+d(1+q)x+d^2q x^2.$$

On the other hand, we have
$$g'(x)=b\left(\frac{(1-q)e^{a(1-q)x}}{1-q
e^{d(1-q)x}}\right)^{b-1}\frac{(1-q)^2e^{a(1-q)x}[a+(d-a)q
e^{d(1-q)x}]}{[1-q e^{d(1-q)x}]^2}.$$ So
\begin{eqnarray*}
c(x)&=&\frac{g'(\bar{f}(x))}{g(\bar{f}(x))}=\frac{b(1-q)[a+(d-a)q
e^{d(1-q)\bar{f}}]}{1-q e^{d(1-q)\bar{f}}}\\&=&ab(1+dq
x)+b(d-a)q(1+dx)\\&=&b[a+(d-a)q]+bd^2q x.
\end{eqnarray*}

Thus the production matrix $P$ of $L$ is tri-diagonal, where
\begin{equation}\label{production matrix}
P=\begin{pmatrix}s_0(q) & 1 & 0 & 0 & 0 & 0\cdots\\
t_1(q) & s_1(q) & 1 & 0 & 0 & 0\cdots\\0 & t_2(q) & s_2(q) & 1
& 0& 0\cdots\\0 & 0 & t_3(q) & s_3(q) & 1 & 0\cdots\\ 0 & 0 & 0 &
t_4(q) & s_4(q) & 1\cdots\\0 & 0 & 0 & 0 & t_5(q) &
s_5(q)\cdots\\\vdots & \vdots & \vdots & \vdots & \vdots & \vdots
\ddots \end{pmatrix},
\end{equation}
with $s_i(q)$ and $t_{i+1}(q)$ given by (\ref{s-t}).

\end{proof}

The second lemma constructs a family of orthogonal polynomials
related to the production matrix $P$ of the exponential Riordan
array $L=[g(x),f(x)]$.

\begin{lemma}\label{orthogonal}
Suppose that the production matrix $P$ of an exponential Riordan
array $L$ is tri-diagonal as above~(\ref{production matrix}). Then
we can construct a family of orthogonal polynomials $Q_n(x)$ defined
by
\begin{equation}\label{orth}Q_n(x)=[x-s_{n-1}(q)]Q_{n-1}(x)-t_{n-1}(q)Q_{n-2}(x),\end{equation}
with $Q_0(x)=1\  and\ Q_1(x)=x-s_0(q)$, where coefficients
$s_{n-1}(q)$ and $t_{n-1}(q)$ are given by the
expression~(\ref{s-t}).
\end{lemma}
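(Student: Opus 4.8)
The plan is to recognize~(\ref{orth}) as the three-term recurrence of a monic orthogonal polynomial sequence whose Jacobi parameters are exactly the diagonal and subdiagonal entries $s_i(q)$ and $t_{i+1}(q)$ of the tridiagonal production matrix~(\ref{production matrix}), and then to invoke Favard's theorem (the converse three-term recurrence theorem for orthogonal polynomials, recalled in the Appendix). First I would verify by induction on $n$ that each $Q_n(x)$ is monic of degree exactly $n$ as a polynomial in $x$ over the field $\mathrm{R}(q)$ of rational functions in $q$: the cases $Q_0=1$ and $Q_1=x-s_0(q)$ are immediate, and in the inductive step the factor $[x-s_{n-1}(q)]Q_{n-1}(x)$ contributes the leading term $x^n$ while $t_{n-1}(q)Q_{n-2}(x)$ has degree $n-2$. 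Hence $\{Q_n(x)\}_{n\geqslant0}$ is a basis of $\mathrm{R}(q)[x]$.

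Next I would exhibit the orthogonalizing functional. Define the linear functional $\mathcal{L}$ on $\mathrm{R}(q)[x]$ by $\mathcal{L}(Q_0)=1$ and $\mathcal{L}(Q_n)=0$ for $n\geqslant1$, extended by linearity through the basis $\{Q_n\}$. The content of Favard's theorem is that a straightforward induction on the recurrence~(\ref{orth}) then yields $\mathcal{L}(Q_mQ_n)=0$ for all $m\neq n$, together with the squared norms
\begin{equation}\label{norms}\mathcal{L}(Q_n^2)=t_1(q)t_2(q)\cdots t_n(q),\end{equation}
so the $Q_n$ are genuinely orthogonal with respect to $\mathcal{L}$. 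The essential input is that every $t_{i+1}(q)=d^2(i+1)(i+b)q$ from~(\ref{s-t}) is a nonzero element of $\mathrm{R}(q)$ (for $d\neq0$ and $b\notin\{0,-1,-2,\ldots\}$), so each product in~(\ref{norms}) is nonzero and $\mathcal{L}$ is quasi-definite.

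The one point I would be most careful about is that the Jacobi parameters lie in $\mathrm{R}(q)$ rather than in $\mathrm{R}$, so the positive-definiteness hypothesis $t_{i+1}>0$ of the classical measure-theoretic Favard theorem is neither available nor needed. What I would use instead is the purely algebraic, quasi-definite form of the theorem: a monic polynomial sequence satisfying~(\ref{orth}) with all $t_{i+1}(q)\neq0$ is automatically an orthogonal polynomial sequence for the functional $\mathcal{L}$ constructed above. Since the construction of $\mathcal{L}$ and the verification of~(\ref{norms}) never invoke any ordering or positivity of the base field, the argument transfers verbatim from $\mathrm{R}$ to $\mathrm{R}(q)$, which gives the lemma and, via the Jacobi continued fraction attached to the moment functional $\mathcal{L}$, sets up the continued fraction~(\ref{ordinary}) of Theorem~\ref{eo}.
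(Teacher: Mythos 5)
Your proof is correct, but it takes a genuinely different route from the paper's. The paper does not re-prove orthogonality at all: it cites Favard's theorem (Theorem~\ref{Bar-1}) as a black box and spends its effort on a matrix computation, writing the recurrence~(\ref{orth}) as $P(Q_0,Q_1,\ldots)^{T}=(xQ_0,xQ_1,\ldots)^{T}$, deducing that the coefficient matrix $A$ of the $Q_n$ in the monomial basis satisfies $PA=A\bar{I}$, and then solving this equation via the exponential Riordan identities $P=L^{-1}\bar{L}$ and $\bar{I}=\bar{L}L^{-1}$ to conclude that $A=L^{-1}$. That identification of $L^{-1}$ as the coefficient matrix is the real payload of the paper's argument: it is what the subsequent Remark records and what makes Barry's moment result (Lemma~\ref{Bar-lemma}) applicable. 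You instead unfold the proof of the quasi-definite Favard theorem itself, constructing the moment functional $\mathcal{L}$ by setting $\mathcal{L}(Q_0)=1$ and $\mathcal{L}(Q_n)=0$ for $n\geqslant1$ and verifying $\mathcal{L}(Q_mQ_n)=\delta_{m,n}\,t_1(q)\cdots t_n(q)$ by the standard induction on the recurrence. Your version buys two things the paper glosses over: it makes explicit that the argument lives over $\mathrm{R}(q)$, where positivity is unavailable and only the algebraic form of Favard applies, and it isolates the nondegeneracy hypothesis $t_{i+1}(q)\neq0$, i.e. $d\neq0$ and $b\notin\{0,-1,-2,\ldots\}$, which the lemma as stated silently needs, since Theorem~\ref{Bar-1} requires $\beta_n\neq0$ (for the degenerate parameters the conclusion as literally stated fails, though the applications in Theorem~\ref{Sq-LCX} survive by truncation). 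What your route does not deliver is the Riordan-theoretic identity $A=L^{-1}$; consequently your closing remark that $\mathcal{L}$ ``sets up'' the continued fraction~(\ref{ordinary}) still requires knowing that the moments of $\mathcal{L}$ are exactly the $T_n(q)$, which in the paper is supplied by the combination of this identification with Lemma~\ref{Bar-lemma} and Theorem~\ref{Bar-2} rather than by the statement under review. As a proof of the lemma as stated, your argument is complete and, on the orthogonality point, more self-contained and more careful than the paper's.
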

\begin{proof}
In order to construct the family of orthogonal polynomials $Q_n(x)$,
it suffices to get the coefficient matrix $A$ such that
\begin{equation}\label{coefficient}
\begin{pmatrix}Q_0(x)\\ Q_1(x) \\ Q_2(x)\\ Q_3(x)\\\vdots\end{pmatrix}=A\begin{pmatrix}1\\x \\ x^2\\ x^3\\\vdots\end{pmatrix}.
\end{equation}
And by the condition and the Favard's Theorem~\ref{Bar-1} in
Appendix, we will get that the orthogonal polynomials $Q_n(x)$
satisfies the following
\begin{equation}\label{recurrence}
P\begin{pmatrix}Q_0(x)\\ Q_1(x) \\ Q_2(x)\\ Q_3(x)\\\vdots\end{pmatrix}=\begin{pmatrix}s_0(q) & 1 & 0 & 0 & 0 & 0\cdots\\
t_1(q) & s_1(q) & 1 & 0 & 0 & 0\cdots\\0 & t_2(q) & s_2(q) & 1
& 0& 0\cdots\\0 & 0 & t_3(q) & s_3(q) & 1 & 0\cdots\\\vdots & \vdots & \vdots & \vdots & \vdots & \vdots
\ddots \end{pmatrix}\begin{pmatrix}Q_0(x)\\ Q_1(x) \\ Q_2(x)\\ Q_3(x)\\\vdots\end{pmatrix}=\begin{pmatrix}xQ_0(x)\\ xQ_1(x) \\ xQ_2(x)\\ xQ_3(x)\\\vdots\end{pmatrix}.
\end{equation}
Then we have that the coefficient matrix $A$ satisfies
\begin{equation}\label{coefficient-1}
PA\begin{pmatrix}1\\ x \\ x^2\\ x^3\\\vdots\end{pmatrix}=A\begin{pmatrix}x\\x^2 \\ x^3\\ x^4\\\vdots\end{pmatrix}=A\bar{I}\begin{pmatrix}1\\ x \\ x^2\\ x^3\\\vdots\end{pmatrix},
\end{equation}
where $\bar{I}=(\delta_{i+1,j})_{i,j\geqslant0}$. Since the
polynomials sequence $\{x^k\}_{k\geqslant0}$ is linear independence.
So the coefficient matrices of the first and last polynomials
in~(\ref{coefficient-1}) are equal, i.e., $PA=A\bar{I}$. Since
$P=L^{-1}\bar{L}, \bar{I}=\bar{L}L^{-1}$. So we have that the
coefficient matrix $A$ will satisfy $L^{-1}\bar{L}A=A\bar{L}L^{-1}$.
Thus we can obtain that $L^{-1}$ is a coefficient matrix of the
orthogonal polynomials $Q_n(x)$. The proof of the lemma is complete.
\end{proof}
\begin{remark}From the proof of Lemma~\ref{orthogonal}, we have that the coefficient matrix of the orthogonal polynomials
$Q_n(t)$ is
\begin{eqnarray*}
L^{-1}=\left[\frac{1}{(1+d x)^{\frac{ab}{d}}(1+dq
x)^{\frac{bd-ab}{d}}},\frac{1}{d(1-q)}\ln\left(\frac{1+d x}{1+dq
x}\right)\right],\end{eqnarray*} which has been shown by
Barry~\cite{Bar13}. However our proof is more natural and based on
the algebraic method.
\end{remark}

The last lemma, obtained by Barry~\cite{Bar13}, gave the connection between the production matrix and the moment sequence of orthogonal polynomials.
\begin{lemma}[{\cite{Bar13}}]\label{Bar-lemma}
Let $L$, $T_n(q)$ and $Q_n(x)$ be as above. Then we have
$\{T_n(q)\}_{n\geqslant0}$ is the moment sequence of the associated
family of orthogonal polynomials $Q_n(x)$.
\end{lemma}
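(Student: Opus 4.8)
The plan is to exhibit an explicit linear functional $\mathcal{L}$ on the space of polynomials in $x$ (with coefficients in $\mathrm{R}[q]$) for which the $Q_n(x)$ form an orthogonal family, and to show that its moments $\mathcal{L}[x^n]$ are exactly the $T_n(q)$. Since the first column of the exponential Riordan array $L=[g(x),f(x)]$ is the coefficient sequence of its first component, we have $L_{n,0}=n![x^n]g(x)=T_n(q)$; so the natural candidate is to \emph{define} $\mathcal{L}$ by declaring $\mathcal{L}[x^n]=T_n(q)=L_{n,0}$ and extending linearly. It then remains only to verify that this $\mathcal{L}$ renders $\{Q_n(x)\}$ orthogonal, for then $\{T_n(q)\}$ is by definition its moment sequence.

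First I would record the zeroth-order orthogonality. By Lemma~\ref{orthogonal}, $L^{-1}$ is the coefficient matrix of the $Q_n(x)$, i.e. $Q_n(x)=\sum_k (L^{-1})_{n,k}x^k$. Applying $\mathcal{L}$ termwise and using $T_k=L_{k,0}$ gives
$$\mathcal{L}[Q_n]=\sum_k (L^{-1})_{n,k}T_k=\sum_k (L^{-1})_{n,k}L_{k,0}=(L^{-1}L)_{n,0}=\delta_{n,0},$$
where the sums are finite because $L^{-1}$ is lower triangular (so there is no convergence issue in this formal setting). Thus $\mathcal{L}[Q_0]=1$ and $\mathcal{L}[Q_n]=0$ for $n\geqslant 1$.

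The key step is to upgrade this to full orthogonality, namely $\mathcal{L}[x^jQ_n]=0$ for $0\leqslant j<n$, and here I would use the production matrix directly rather than grinding through the three-term recurrence. Writing $\mathbf{Q}(x)=(Q_0(x),Q_1(x),\ldots)^{\mathrm{T}}$, relation~(\ref{recurrence}) reads $P\mathbf{Q}(x)=x\mathbf{Q}(x)$; since $P$ has constant (in $x$) entries it commutes with multiplication by the scalar $x$, so iterating yields $x^j\mathbf{Q}(x)=P^j\mathbf{Q}(x)$, that is $x^jQ_n=\sum_m (P^j)_{n,m}Q_m$. Applying $\mathcal{L}$ and invoking $\mathcal{L}[Q_m]=\delta_{m,0}$ collapses this to $\mathcal{L}[x^jQ_n]=(P^j)_{n,0}$. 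By Lemma~\ref{production} the matrix $P$ is tridiagonal, so $(P^j)_{n,0}=0$ whenever $n>j$. Hence $\mathcal{L}[x^jQ_n]=0$ for all $j<n$, meaning $Q_n$ is orthogonal to every polynomial of degree below $n$; as $\deg Q_m=m$, this gives $\mathcal{L}[Q_mQ_n]=0$ for $m\neq n$, so $\{Q_n(x)\}$ is orthogonal with respect to $\mathcal{L}$ and $\{T_n(q)\}=\{\mathcal{L}[x^n]\}$ is its moment sequence.

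I expect the only genuine obstacle to be bookkeeping rather than substance. One must justify the bandwidth bound $(P^j)_{n,0}=0$ for $n>j$ on the semi-infinite tridiagonal $P$: this holds because $(P^j)_{n,0}$ is a finite sum over index paths $0=i_0,i_1,\ldots,i_j=n$ with $|i_{k+1}-i_k|\leqslant 1$, and a net displacement of $n$ in $j$ unit steps forces $n\leqslant j$. It is also worth flagging that the whole argument is purely formal over $\mathrm{R}[q]$, so quasi-definiteness of $\mathcal{L}$ (equivalently $t_{i+1}(q)\neq 0$) is \emph{not} needed to identify the moments; it would be invoked only if one wished to assert uniqueness of the orthogonalizing functional. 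This keeps the proof self-contained given Lemmas~\ref{production} and~\ref{orthogonal}.
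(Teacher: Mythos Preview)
The paper does not actually prove this lemma; it merely states it with attribution to Barry~\cite{Bar13} and then invokes it together with Theorem~\ref{Bar-2} to deduce the continued fraction~(\ref{ordinary}). Your argument supplies a correct, self-contained proof from the ingredients already assembled in Lemmas~\ref{production} and~\ref{orthogonal}: the identification $L_{n,0}=T_n(q)$ (zeroth column of $[g,f]$ has EGF $g$), the fact that $L^{-1}$ is the coefficient matrix of the $Q_n$, and the relation $P\mathbf{Q}=x\mathbf{Q}$. Your use of the bandwidth bound $(P^j)_{n,0}=0$ for $n>j$, justified by the tridiagonal shape of $P$, is the right way to pass from $\mathcal{L}[Q_n]=\delta_{n,0}$ to full orthogonality, and the finiteness of all sums is clear since both $L^{-1}$ and $P^j$ are banded. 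Your closing remark that quasi-definiteness is not needed for the moment identification is also apt. In short, where the paper defers to an external reference, you give a clean internal argument; nothing is missing.
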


Now we can obtain that the ordinary generating function of
$\{T_n(q)\}_{n\geqslant0}$ is given by the continued
fraction~(\ref{ordinary}) from Theorem~\ref{Bar-2}, which proves
Theorem~\ref{exponential}.

Then we can present the strong $q$-log-convexity of
$\{T_n(q)\}_{n\geqslant 0}$ using the following criterion of
Zhu~\cite{Zhu}.
\begin{theorem}[{\cite[Proposition 3.13]{Zhu}}]\label{Sq-LCX-Zhu}
Given two sequences $\{s_i(q)\}_{i\geqslant 0}$ and
$\{t_{i+1}(q)\}_{i\geqslant 0}$ of polynomials with nonnegative
coefficients, let
\begin{equation*}
\sum_{n\geqslant
0}D_{n}(q)x^n=\frac{1}{1-s_0(q)x-\cfrac{t_1(q)x^2}{1-s_1(q)x-\cfrac{t_2(q)x^2}{1-s_2(q)x-\cfrac{t_3(q)x^2}{1-s_3(q)x-\cdots}}}}.
\end{equation*}
If $s_i(q)s_{i+1}(q)\geqslant_qt_{i+1}(q)$ for all $i\geqslant1$,
then the sequence $\{D_{n}(q)\}_{n\geqslant 0}$ is strongly
$q$-log-convex.
\end{theorem}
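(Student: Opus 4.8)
The plan is to recast strong $q$-log-convexity as a total-positivity statement and then to read it off from the tridiagonal structure underlying the continued fraction. First I would observe that, for $n\geqslant m\geqslant1$, the difference $D_{m-1}(q)D_{n+1}(q)-D_m(q)D_n(q)$ is exactly the $2\times2$ minor of the Hankel matrix $H=(D_{i+j}(q))_{i,j\geqslant0}$ formed from rows $\{m-1,n\}$ and columns $\{0,1\}$. Hence the assertion is equivalent to saying that the submatrix of $H$ consisting of its first two columns has all its $2\times2$ minors nonnegative in the order $\geqslant_q$; that is, this $\infty\times2$ matrix is $q$-totally positive of order $2$. This reformulation is the target inequality I will establish.

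Next I would bring in the tridiagonal matrix attached to the continued fraction, namely the Jacobi (production) matrix $J$ with diagonal entries $s_i(q)$, subdiagonal entries $t_{i+1}(q)$, superdiagonal entries $1$, and zeros elsewhere, which has the same shape as the production matrix displayed above. By Flajolet's combinatorial theory of continued fractions, the coefficient $D_n(q)$ equals the generating function of weighted Motzkin paths of length $n$ from height $0$ to height $0$, where a level step at height $h$ carries weight $s_h(q)$, an up step carries weight $1$, and a down step from height $h$ carries weight $t_h(q)$; equivalently $D_n(q)=(J^n)_{0,0}$. The point I want to exploit is that the $2\times2$ diagonal blocks of $J$ have determinant $s_i(q)s_{i+1}(q)-t_{i+1}(q)$, so the hypothesis $s_i(q)s_{i+1}(q)\geqslant_q t_{i+1}(q)$, together with the nonnegativity of the coefficients of $s_i$ and $t_{i+1}$, is precisely the statement that $J$ is $q$-totally positive of order $2$.

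With these two observations in place, the heart of the argument is to propagate the $q$-total positivity of $J$ to the minor I need. Writing $D_{i+j}(q)=(J^{i+j})_{0,0}=\sum_{k\geqslant0}(J^i)_{0,k}(J^j)_{k,0}$, I would factor $H=RC$, where $R_{i,k}=(J^i)_{0,k}$ and $C_{k,j}=(J^j)_{k,0}$, and apply the Cauchy--Binet formula to express each minor of $H$ as an $\geqslant_q$-nonnegative combination of products of minors of $R$ and $C$. Since $R$ and $C$ are themselves matrices of weighted path counts for the transfer matrix $J$, the Lindstr\"om--Gessel--Viennot lemma interprets their minors as signed sums over tuples of lattice paths, and a crossing-and-swapping involution collapses each signed sum to an $\geqslant_q$-nonnegative sum over families of non-intersecting paths, the local weight comparison at each crossing being controlled exactly by a $2\times2$ minor of $J$, i.e. by one of the quantities $s_i(q)s_{i+1}(q)-t_{i+1}(q)$.

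The step I expect to be the main obstacle is the last one: carrying out the sign-reversing involution so that it respects the coefficientwise order $\geqslant_q$ rather than mere numerical positivity, and verifying that the crossings relevant to the minors with columns $\{0,1\}$ and rows $\{m-1,n\}$ only ever invoke diagonal blocks of $J$ at heights $i\geqslant1$. This last point is what matches the hypothesis being imposed for $i\geqslant1$ rather than $i\geqslant0$, and it is tied to the restriction $n\geqslant m\geqslant1$ in the definition of strong $q$-log-convexity; tracking it, and the weight bookkeeping at low heights, is the delicate part of the proof.
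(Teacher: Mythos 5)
Since the paper does not prove this criterion at all---it quotes it from \cite{Zhu}, where it is established by an inductive, algebraic argument on the triangular array attached to the tridiagonal production matrix, with no lattice paths involved---your total-positivity program is a genuinely different route, and its first reductions are sound: $D_{m-1}(q)D_{n+1}(q)-D_m(q)D_n(q)$ is indeed the Hankel minor with rows $\{m-1,n\}$ and columns $\{0,1\}$, $D_n(q)=(J^n)_{0,0}$ by Flajolet's correspondence, and your Cauchy--Binet step even collapses to a single term, since column $0$ of your matrix $C$ is $(1,0,0,\dots)^{T}$ and column $1$ is $(s_0(q),t_1(q),0,\dots)^{T}$, giving
\[
D_{m-1}(q)D_{n+1}(q)-D_m(q)D_n(q)=t_1(q)\bigl[(J^{m-1})_{0,0}(J^{n})_{0,1}-(J^{m-1})_{0,1}(J^{n})_{0,0}\bigr].
\]
But your closing strategy---hoping that the relevant minors ``only ever invoke diagonal blocks of $J$ at heights $i\geqslant1$''---is provably impossible. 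Taking $m=n=2$ one computes directly
\[
D_1(q)D_3(q)-D_2^2(q)=t_1(q)\bigl(s_0(q)s_1(q)-t_1(q)\bigr),
\]
so the height-$0$ block is unavoidably invoked; with $s_i\equiv1$, $t_1=3$, $t_i=1$ for $i\geqslant2$, the hypothesis as quoted (for $i\geqslant1$ only) holds while $D_1D_3-D_2^2=-6<0$. In other words, the statement as printed carries an off-by-one slip: the inequality $s_i(q)s_{i+1}(q)\geqslant_q t_{i+1}(q)$ must be assumed for all $i\geqslant0$ (as the computation in the proof of Theorem~\ref{Sq-LCX} in fact verifies for all $i\geqslant0$, so nothing downstream is affected). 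Your identification of the hypothesis with $q$-total positivity of order $2$ of $J$ is correct only once $i=0$ is included, since the sole nontrivial $2\times2$ minors of a tridiagonal matrix are the consecutive principal ones $s_is_{i+1}-t_{i+1}$, $i\geqslant0$; no bookkeeping tied to $n\geqslant m\geqslant1$ can rescue the $i\geqslant1$ version.

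The second, independent gap is the Lindstr\"om--Gessel--Viennot step itself. For Motzkin-type paths the classical tail-swap involution only cancels pairs of paths that share a vertex; two paths can cross transversally---one stepping $a\to a+1$ with weight $1$ while the other steps $a+1\to a$ with weight $t_{a+1}(q)$---without ever meeting, and such configurations do not cancel. They must instead be injected into level--level configurations of weight $s_a(q)s_{a+1}(q)$, which is exactly where $s_a(q)s_{a+1}(q)\geqslant_q t_{a+1}(q)$ enters; but making this a well-defined, globally consistent, weight-dominating injection when several crossings occur, and keeping every comparison coefficientwise in $q$, is not a routine ``crossing-and-swapping involution''---it is essentially the entire content of the theorem. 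Your proposal names this as the expected obstacle but supplies no argument for it, so as written the proof is incomplete at its heart, and the announced plan for avoiding the $i=0$ inequality would fail for the concrete reason above.
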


The main result of this section is the following.
\begin{theorem}\label{Sq-LCX}
The polynomials sequence $\{T_{n}(q)\}_{n\geqslant0}$ defined
by~(\ref{exponential}) forms a strongly $q$-log-convex sequence for
$b\geqslant0$ and $d\geqslant a\geqslant 0$
\end{theorem}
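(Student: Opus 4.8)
The plan is to apply the continued-fraction criterion of Zhu, Theorem~\ref{Sq-LCX-Zhu}, directly to the sequence $\{T_n(q)\}_{n\geqslant 0}$. By Theorem~\ref{eo} its ordinary generating function is exactly the continued fraction~(\ref{ordinary}) with coefficients $s_i(q)$ and $t_{i+1}(q)$ given by~(\ref{s-t}). Thus it suffices to verify the two hypotheses of Theorem~\ref{Sq-LCX-Zhu}: first, that every $s_i(q)$ and every $t_{i+1}(q)$ has nonnegative coefficients, and second, that $s_i(q)s_{i+1}(q)\geqslant_q t_{i+1}(q)$ for all $i\geqslant 1$.

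First I would dispose of the nonnegativity. Writing $s_i(q)=(di+ab)+(di+bd-ab)q$, the constant term $di+ab$ is nonnegative because $d,i\geqslant 0$ and $a,b\geqslant 0$, while the linear term rearranges as $di+b(d-a)$, which is nonnegative since $d\geqslant a$ forces $d-a\geqslant 0$ together with $b\geqslant 0$. The single coefficient $d^2(i+1)(i+b)$ of $t_{i+1}(q)$ is nonnegative because $i+b\geqslant 0$. Hence both families lie in the nonnegative cone demanded by Theorem~\ref{Sq-LCX-Zhu}.

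Next I would verify the key inequality by expanding $s_i(q)s_{i+1}(q)-t_{i+1}(q)$ as a quadratic in $q$. Its constant coefficient is $(di+ab)\bigl(d(i+1)+ab\bigr)\geqslant 0$ and its quadratic coefficient is $\bigl(di+b(d-a)\bigr)\bigl(d(i+1)+b(d-a)\bigr)\geqslant 0$, both being products of the nonnegative quantities already identified. The only coefficient whose sign is not immediate is the linear one, and this is where the content lies. Abbreviating $u=di$, $v=d(i+1)$, $p=ab$, $w=b(d-a)$, so that $p+w=bd$ and $u+v=d(2i+1)$, a direct expansion collapses the linear coefficient to $d^2 i(i+1+b)+2ab^2(d-a)$; under $b\geqslant 0$ and $d\geqslant a\geqslant 0$ both summands are manifestly nonnegative for $i\geqslant 1$, giving $s_i(q)s_{i+1}(q)\geqslant_q t_{i+1}(q)$.

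The main obstacle is precisely this linear-coefficient computation: the naive expansion produces several terms of mixed sign (the $-ab$ pieces coming from the $s$ factors together with the subtracted $t_{i+1}$), and the cancellation into the clean form $d^2 i(i+1+b)+2ab^2(d-a)$ must be carried out with care, since it is exactly the algebra that makes the hypotheses $d\geqslant a\geqslant 0$ and $b\geqslant 0$ do their work. Once the inequality is established, Theorem~\ref{Sq-LCX-Zhu} applies verbatim and yields the strong $q$-log-convexity of $\{T_n(q)\}_{n\geqslant 0}$, completing the proof.
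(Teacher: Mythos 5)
Your proposal is correct and follows essentially the same route as the paper: invoke Theorem~\ref{eo} to obtain the continued fraction~(\ref{ordinary}) and then verify the hypothesis $s_i(q)s_{i+1}(q)\geqslant_q t_{i+1}(q)$ of Zhu's criterion (Theorem~\ref{Sq-LCX-Zhu}) by examining the three coefficients of $s_i(q)s_{i+1}(q)-t_{i+1}(q)$ as a quadratic in $q$. The only (cosmetic) difference is in the linear coefficient: you collapse it exactly to the manifestly nonnegative form $d^2i(i+1+b)+2ab^2(d-a)$, whereas the paper reaches the same conclusion via a two-step chain of $\geqslant_q$ lower bounds ending in $ab^2d-a^2b^2=ab^2(d-a)\geqslant 0$; your identity is correct and, if anything, slightly tidier, and you also make explicit the nonnegativity of the coefficients of $s_i(q)$ and $t_{i+1}(q)$, which the paper leaves implicit.
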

\begin{proof}
By Theorem~\ref{eo}, if the exponential generating function of
$\{T_{n}(q)\}_{n\geqslant0}$ has the expression~(\ref{exponential}),
then we have the ordinary generating function of
$\{T_{n}(q)\}_{n\geqslant0}$ can be given by the continued
fraction~(\ref{ordinary}). Note that $s_i(q)=(di+ab)+(di+bd-ab)q$
and $t_{i+1}(q)=d^2(i+1)(i+b)q$ for $i\geqslant0$ . So
\begin{eqnarray*}
&&s_i(q)s_{i+1}(q)-t_{i+1}(q)\\
&=&((di+ab)+(di+bd-ab)q)((di+d+ab)+(di+d+bd-ab)q)-d^2(i+1)(i+b)q\\
&=&(di+ab)(di+d+ab)+(di+bd-ab)(di+d+bd-ab)q^2\\
& &+((di+ab)(di+d+bd-ab)+(di+bd-ab)(di+d+ab)-d^2(i+1)(i+b))q\\
&=&(di+ab)(di+d+ab)+(di+bd-ab)(di+d+bd-ab)q^2\\
& &+((di+ab)(di+d+bd-ab)+abd(b-1)-a^2b^2)q\\
&\geqslant_q&(di+ab)(di+d+ab)+(di+bd-ab)(di+d+bd-ab)q^2+(ab^2d-a^2b^2)q\\
&\geqslant_q&0.
\end{eqnarray*}
The first and second inequalities hold by conditions $i,b\geqslant0$
and $d\geqslant a\geqslant 0$. Hence the polynomials sequence
$\{T_{n}(q)\}_{n\geqslant0}$ forms a strongly $q$-log-convex
sequence by Theorem~\ref{Sq-LCX-Zhu}
\end{proof}
\section{the Eulerian polynomials of Coxeter groups}
\hspace*{\parindent} Given a finite Coxeter group $W$, define the
Eulerian polynomials of $W$ by $$P(W,q)=\sum_{\pi\in
W}q^{d_W(\pi)},$$ where $d_W(\pi)$ is the number of $W$-descents of
$\pi$. We refer the reader to Bj\"orner~\cite{BB} for relevant
definitions.

For Coxeter groups of type $A_n$, it is known that
$P(A_n,q)=A_n(q)/q$, the shifted Eulerian polynomials, whose strong
$q$-log-convexity was obtained by Zhu~\cite{Zhu}.
Since the exponential
generating function of $\{A_n(q)\}_{n\geqslant 0}$ and
$\{P(A_n,q)\}_{n\geqslant 0}$ is
\begin{equation}\label{A}\sum_{n\geqslant0}A_n(q)\frac{x^n}{n!}=\frac{(1-q)}{1-qe^{x(1-q)}}\end{equation}
and
\begin{equation}\sum_{n\geqslant0}P(A_n,q)\frac{x^n}{n!}=\frac{(1-q)e^{x(1-q)}}{1-qe^{x(1-q)}}\end{equation}
respectively (see~\cite[p. 244]{Com74}). So from Theorem~\ref{eo}, we have
\begin{equation*}
\sum_{n\geqslant
0}A_n(q)x^n=\frac{1}{1-x-\cfrac{qx^2}{1-(2+q)x-\cfrac{4qx^2}{1-(3+2q)x-\cfrac{9qx^2}{1-(4+3q)x-\cdots}}}},
\end{equation*}
with $s_i(q)=i+(i+1)q$ and $t_{i+1}(q)=(i+1)^2q$ for $i\geqslant0$ (see~\cite{Bar11-1} for instance). And
\begin{equation*}
\sum_{n\geqslant
0}P(A_n,q)x^n=\frac{1}{1-qx-\cfrac{qx^2}{1-(1+2q)x-\cfrac{4qx^2}{1-(2+3q)x-\cfrac{9qx^2}{1-(3+4q)x-\cdots}}}},
\end{equation*}
with $s_i(q)=(i+1)+iq$ and $t_{i+1}(q)=(i+1)^2q$ for $i\geqslant0$.

Obviously, we have the following result by Theorem~\ref{Sq-LCX}.
\begin{proposition}
The polynomials $P(A_n,q)$ and $A_n(q)$ form strongly $q$-log-convex sequences
respectively.
\end{proposition}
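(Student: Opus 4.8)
The plan is to recognize both exponential generating functions as instances of the general template~(\ref{exponential}) and then to invoke Theorem~\ref{Sq-LCX} directly, with no further computation. First I would compare the stated generating function $\sum_{n\geqslant0}A_n(q)x^n/n!=(1-q)/(1-qe^{x(1-q)})$ with the template $\left(\frac{(1-q)e^{a(1-q)x}}{1-qe^{d(1-q)x}}\right)^b$. Matching the outer prefactor forces $b=1$ (so that $(1-q)^b=1-q$), matching the exponent in the denominator forces $d=1$, and the absence of any exponential in the numerator forces $a=0$. Hence $\{A_n(q)\}_{n\geqslant0}$ is exactly the sequence $\{T_n(q)\}_{n\geqslant0}$ attached to the parameters $(a,b,d)=(0,1,1)$.

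Next I would carry out the same matching for the shifted polynomials. Since $\sum_{n\geqslant0}P(A_n,q)x^n/n!=(1-q)e^{x(1-q)}/(1-qe^{x(1-q)})$ differs from the previous generating function only by the numerator factor $e^{x(1-q)}$, comparison with the template again yields $b=1$ and $d=1$, while now the numerator exponential forces $a=1$. Thus $\{P(A_n,q)\}_{n\geqslant0}$ is the sequence $\{T_n(q)\}_{n\geqslant0}$ for the parameters $(a,b,d)=(1,1,1)$.

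With the parameters identified, the remaining step is to check the hypotheses of Theorem~\ref{Sq-LCX}, namely $b\geqslant0$ and $d\geqslant a\geqslant0$. For $A_n(q)$ these read $1\geqslant0$ and $1\geqslant0\geqslant0$, and for $P(A_n,q)$ they read $1\geqslant0$ and $1\geqslant1\geqslant0$; both hold. Applying Theorem~\ref{Sq-LCX} to each parameter choice then delivers the strong $q$-log-convexity of $\{A_n(q)\}_{n\geqslant0}$ and of $\{P(A_n,q)\}_{n\geqslant0}$, completing the argument.

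Since this is a pure specialization of the main theorem, I do not anticipate any genuine obstacle; the only point requiring care is the bookkeeping of the three parameters, in particular recognizing that the distinction between the two families lives entirely in the numerator exponential, i.e. in the value of $a$ (with $a=0$ for $A_n(q)$ and $a=1$ for $P(A_n,q)$), while $b=d=1$ in both cases. As a consistency check I would substitute these values into~(\ref{s-t}) and confirm that they reproduce exactly the coefficients $s_i(q)=i+(i+1)q$, $t_{i+1}(q)=(i+1)^2q$ for $A_n(q)$ and $s_i(q)=(i+1)+iq$, $t_{i+1}(q)=(i+1)^2q$ for $P(A_n,q)$ displayed above the Proposition, which validates the identification of parameters before Theorem~\ref{Sq-LCX} is applied.
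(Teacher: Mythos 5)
Your proposal is correct and matches the paper's own argument: the paper likewise identifies $A_n(q)$ and $P(A_n,q)$ as the specializations $(a,b,d)=(0,1,1)$ and $(1,1,1)$ of the template~(\ref{exponential}), records the resulting coefficients $s_i(q)=i+(i+1)q$, respectively $s_i(q)=(i+1)+iq$, with $t_{i+1}(q)=(i+1)^2q$, and then applies Theorem~\ref{Sq-LCX}. Your parameter bookkeeping and the verification of the hypotheses $b\geqslant0$, $d\geqslant a\geqslant0$ are exactly the content of the paper's proof.
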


In~\cite{FS}, Foata and Sch\"utzenberger introduced a $q$-analog of
the classical Eulerian polynomials defined by
$$A_n(q;t):=\sum_{\pi\in S_n}q^{\exc(\pi)+1}t^{c(\pi)},$$
where $\exc(\pi)$ and $c(\pi)$ denote the numbers of excedances and
cycles in $\pi$ respectively. It is clear that $A_n(q;1)=A_n(q)$ is
precisely the classical Eulerian polynomial. Brenti showed
that the exponential generating function of
$\{A_n(q;t)\}_{n\geqslant0}$ is given by
\begin{equation}\label{q-A}\sum_{n\geqslant0}A_n(q;t)\frac{x^n}{n!}=\left(\frac{(1-q)e^{x(1-q)}}{1-qe^{x(1-q)}}\right)^t.\end{equation}
 So from Theorem~\ref{eo}, we have
\begin{equation*}
\sum_{n\geqslant
0}A_n(q;t)x^n=\frac{1}{1-tx-\cfrac{tqx^2}{1-(t+1+q)x-\cfrac{2(t+1)qx^2}{1-(t+2+2q)x-\cfrac{3(t+2)qx^2}{1-(t+3+3q)x-\cdots}}}}.
\end{equation*}
Here $s_i(q)=(t+i)+iq$ and $t_{i+1}(q)=(i+1)(t+i)q$ for $i\geqslant
0$.

Obviously, we have the following result by Theorem~\ref{Sq-LCX}.
\begin{proposition}
The polynomials $A_n(q;t)$ form a strongly $q$-log-convex sequence
for $t\geqslant 0$.
\end{proposition}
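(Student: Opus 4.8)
The plan is to recognize this proposition as a direct specialization of the master result Theorem~\ref{Sq-LCX}, so the entire argument reduces to matching parameters and checking hypotheses. The exponential generating function (\ref{q-A}) for $\{A_n(q;t)\}_{n\geqslant0}$ is
$$\sum_{n\geqslant0}A_n(q;t)\frac{x^n}{n!}=\left(\frac{(1-q)e^{x(1-q)}}{1-qe^{x(1-q)}}\right)^t,$$
which I would compare coefficientwise in the exponent with the general template (\ref{exponential}),
$$g(x)=\left(\frac{(1-q)e^{a(1-q)x}}{1-qe^{d(1-q)x}}\right)^b.$$
Reading off the exponents inside both exponentials forces $a=1$ and $d=1$, while the outer power gives $b=t$. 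Thus $\{A_n(q;t)\}_{n\geqslant0}$ is precisely the sequence $\{T_n(q)\}_{n\geqslant0}$ of Theorem~\ref{eo} with $(a,b,d)=(1,t,1)$.

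First I would record that this identification is consistent with the continued-fraction data already displayed above the proposition: substituting $a=d=1$ and $b=t$ into (\ref{s-t}) yields $s_i(q)=(i+t)+(i+t-t)q=(t+i)+iq$ and $t_{i+1}(q)=1^2\cdot(i+1)(i+t)q=(i+1)(t+i)q$, exactly the coefficients listed for the expansion of $\sum_{n\geqslant0}A_n(q;t)x^n$. This serves as a cross-check that the parameter assignment is correct before any appeal to the strong $q$-log-convexity criterion.

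Having fixed the parameters, I would verify the hypotheses of Theorem~\ref{Sq-LCX}, namely $b\geqslant0$ and $d\geqslant a\geqslant0$. Here $b=t\geqslant0$ holds precisely under the stated assumption $t\geqslant0$, and $d\geqslant a\geqslant0$ becomes $1\geqslant1\geqslant0$, which is automatic. With both conditions in force, Theorem~\ref{Sq-LCX} applies verbatim and asserts that $\{T_n(q)\}_{n\geqslant0}=\{A_n(q;t)\}_{n\geqslant0}$ is strongly $q$-log-convex, completing the proof. There is in truth no genuine obstacle in this step: all the analytic work — producing the tridiagonal production matrix, building the associated orthogonal polynomials, extracting the Jacobi continued fraction, and checking the inequality $s_i(q)s_{i+1}(q)\geqslant_q t_{i+1}(q)$ — has already been carried out at the level of general $(a,b,d)$ in Theorem~\ref{Sq-LCX}. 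The only point requiring the slightest care is ensuring that the boundary case $d=a=1$ still satisfies $d\geqslant a\geqslant0$ (it does, with equality), so that the specialization legitimately falls within the admissible parameter region rather than on its exterior.
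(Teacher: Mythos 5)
Your proposal is correct and follows exactly the paper's own route: identify $(a,b,d)=(1,t,1)$ from the generating function (\ref{q-A}), note that (\ref{s-t}) then gives $s_i(q)=(t+i)+iq$ and $t_{i+1}(q)=(i+1)(t+i)q$, and invoke Theorem~\ref{Sq-LCX} with $b=t\geqslant 0$ and $d=a=1$. Your explicit verification of the hypotheses and the cross-check against the displayed continued-fraction coefficients is a slightly more careful write-up of the step the paper dispatches with ``Obviously.''
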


For Coxeter groups of type $B_n$, suppose that the Eulerian
polynomials of type $B_n$
\begin{equation*}P(B_n,q)=\sum_{k=0}^nB_{n,k}q^k,\end{equation*} where
$B_{n,k}$ is the Eulerian numbers of type $B_n$ counting the
elements of $B_n$ with $k$ $B$-descents. Then the Eulerian numbers
of type $B_n$ satisfy the recurrence
\begin{equation}\label{recurrenc-1}B_{n,k}=(2k+1)B_{n-1,k}+(2n-2k+1)B_{n-1,k-1}.\end{equation} Hence the
Eulerian polynomials of type $B_n$ satisfy the recurrence
\begin{equation}\label{recurrenc-2}P(B_n,q)=[(2n-1)q+1]P(B_{n-1},q)+2q(q-1)P'(B_{n-1},q).\end{equation} It is well
known that $P(B_n,q)$ have only real zeros (see~\cite{Bre94,SV} for
instance). Note that the exponential generating function of the
Eulerian polynomials of type $B_n$ has the following expression
\begin{equation}\label{recurrenc-3}\sum_{n\geqslant0}P(B_n,q)\frac{x^n}{n!}=\frac{(1-q)e^{x(1-q)}}{1-qe^{2x(1-q)}}\end{equation}
(see~\cite[Theorem3.4]{Bre94} and~\cite[Corollary3.9]{CG}). Hence
from Theorem~\ref{exponential}, we have the generating function of
the Eulerian polynomials of type $B_n$ is given by
\begin{equation*}
\sum_{n\geqslant
0}P(B_n,q)x^n=\frac{1}{1-(1+q)x-\cfrac{4qx^2}{1-3(1+q)x-\cfrac{16qx^2}{1-5(1+q)x-\cfrac{36qx^2}{1-7(1+q)x-\cdots}}}}.
\end{equation*}
Here $s_i(q)=(2i+1)(1+q)$ and $t_{i+1}(q)=4(i+1)^2q$ for $i\geqslant
0.$

Thus the strong $q$-log-convexity of $P(B_n,q)$ follows from
Theorem~\ref{Sq-LCX}.
\begin{proposition}
The polynomials $P(B_n,q)$ form a strongly
$q$-log-convex sequence.
\end{proposition}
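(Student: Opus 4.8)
The plan is to recognize the statement as an immediate specialization of Theorem~\ref{Sq-LCX}, so that the only real work is to read off the parameters $a,b,d$ from the exponential generating function and to check that they land in the admissible region $b\geqslant 0$, $d\geqslant a\geqslant 0$.

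First I would start from the known exponential generating function~(\ref{recurrenc-3}),
$$\sum_{n\geqslant0}P(B_n,q)\frac{x^n}{n!}=\frac{(1-q)e^{x(1-q)}}{1-qe^{2x(1-q)}},$$
and compare it term by term with the general template~(\ref{exponential}),
$$g(x)=\left(\frac{(1-q)e^{a(1-q)x}}{1-q e^{d(1-q)x}}\right)^b.$$
Matching the exponent in the numerator gives $a=1$, matching the exponent in the denominator gives $d=2$, and since there is no outer power the exponent is $b=1$. Thus $\{P(B_n,q)\}_{n\geqslant 0}$ is exactly the sequence $\{T_n(q)\}_{n\geqslant 0}$ of~(\ref{exponential}) for the parameter choice $(a,b,d)=(1,1,2)$.

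Next I would verify the hypotheses of Theorem~\ref{Sq-LCX}. With $(a,b,d)=(1,1,2)$ we have $b=1\geqslant 0$ and $d=2\geqslant a=1\geqslant 0$, so both conditions hold. As a consistency check, substituting these values into~(\ref{s-t}) recovers $s_i(q)=(2i+1)(1+q)$ and $t_{i+1}(q)=4(i+1)^2q$, which agree with the coefficients already displayed in the continued-fraction expansion of $\sum_n P(B_n,q)x^n$. Applying Theorem~\ref{Sq-LCX} then yields the strong $q$-log-convexity of $\{P(B_n,q)\}_{n\geqslant 0}$ at once.

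There is no genuine obstacle in this argument: all the analytic difficulty---producing the tri-diagonal production matrix, building the associated family of orthogonal polynomials, and converting the continued fraction into a strong $q$-log-convexity statement through Zhu's criterion (Theorem~\ref{Sq-LCX-Zhu})---has already been absorbed into Theorem~\ref{Sq-LCX}. The only point requiring minor care is the initial identification: one must confirm that the type-$B_n$ generating function has precisely the shape~(\ref{exponential}) with $b=1$ rather than a genuinely different functional form, and that the resulting parameters respect the inequality $d\geqslant a$. Both are immediate here, which is why the proof reduces to a one-line invocation of the main theorem.
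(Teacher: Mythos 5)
Your proposal is correct and matches the paper's own argument exactly: the paper likewise reads off the generating function~(\ref{recurrenc-3}) as the case $(a,b,d)=(1,1,2)$ of~(\ref{exponential}), notes the resulting continued-fraction coefficients $s_i(q)=(2i+1)(1+q)$ and $t_{i+1}(q)=4(i+1)^2q$ via Theorem~\ref{eo}, and concludes by Theorem~\ref{Sq-LCX}. Your explicit verification that $b=1\geqslant 0$ and $d=2\geqslant a=1\geqslant 0$ is the same (implicit) check the paper relies on, so there is nothing to add.
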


From the definitions, if a sequence of polynomials is strongly $q$-log-convex, then it is $q$-log-convex. So we have the following corollary immediately.
\begin{corollary}\label{q-LCX}
The polynomials $P(B_n,q)$ form a
$q$-log-convex sequence.
\end{corollary}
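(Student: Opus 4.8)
The plan is to obtain this statement as an immediate logical consequence of the strong $q$-log-convexity already established in the preceding proposition, so that essentially no new work is required. Recall the two notions introduced in Section~1: a real polynomial sequence $\{f_n(q)\}_{n\geqslant0}$ is \emph{$q$-log-convex} if $f_{n-1}(q)f_{n+1}(q)\geqslant_q f_n^2(q)$ for every $n\geqslant1$, while it is \emph{strongly $q$-log-convex} if $f_{m-1}(q)f_{n+1}(q)\geqslant_q f_m(q)f_n(q)$ for all $n\geqslant m\geqslant1$. The key observation is purely formal: the $q$-log-convexity inequality is exactly the diagonal case $m=n$ of the strong inequality.

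First I would invoke the preceding proposition, which asserts that $\{P(B_n,q)\}_{n\geqslant0}$ is strongly $q$-log-convex; this holds because its ordinary generating function admits the continued-fraction expansion with $s_i(q)=(2i+1)(1+q)$ and $t_{i+1}(q)=4(i+1)^2q$, so that Theorem~\ref{Sq-LCX} (via the criterion of Theorem~\ref{Sq-LCX-Zhu}) applies. Then I would specialize the defining strong inequality $f_{m-1}(q)f_{n+1}(q)\geqslant_q f_m(q)f_n(q)$ to the choice $m=n$. For each $n\geqslant1$ this yields $P(B_{n-1},q)P(B_{n+1},q)\geqslant_q P(B_n,q)^2$, which is precisely the condition defining $q$-log-convexity of $\{P(B_n,q)\}_{n\geqslant0}$.

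I expect there to be no genuine obstacle here: the corollary is a strict weakening of the proposition, and the only content is recognizing that setting the two indices equal in the strong definition recovers the ordinary one. The sole point worth a moment's care is confirming that the index ranges match, namely that the strong definition is quantified over all $n\geqslant m\geqslant1$ and therefore legitimately includes the case $m=n$ for every $n\geqslant1$; once this is noted, the implication, and hence the corollary, follow at once.
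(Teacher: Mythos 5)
Your proposal is correct and matches the paper's argument exactly: the paper also deduces this corollary immediately from the preceding proposition by noting that strong $q$-log-convexity implies $q$-log-convexity, which is precisely your observation that setting $m=n$ in the strong inequality recovers the ordinary one.
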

\begin{remark}
From Liu and Wang~\cite[Theorem 4.1]{LW07-2}, we can also get
Corollary~\ref{q-LCX} using recurrences~(\ref{recurrenc-1})
and~(\ref{recurrenc-2}).
\end{remark}

Brenti~\cite{Bre94} defined a $q$-analogue of the polynomials $P(B_n,q)$ by
$$B_n(q;t):=\sum_{\pi\in B_n}q^{d_B(\pi)}t^{N(\pi)},$$ where $N(\pi):=|\{i\in[n],\pi(i)<0\}|.$
In particular, if $t=1$, then $B_n(q;1)=P(B_n,q)$, the Eulerian
polynomials of type $B_n$. And if $t=0$, then $B_n(q;0)=A_n(q)$, the
classical Eulerian polynomials. He showed that the exponential
generating function of $\{B_n(q;t)\}_{n\geqslant0}$ has the
following expression
\begin{equation}\label{q-B}
\sum_{n\geqslant0}B_n(q;t)\frac{x^n}{n!}=\frac{(1-q)e^{x(1-q)}}{1-qe^{x(1-q)(1+t)}}.
\end{equation}
 So from Theorem~\ref{eo}, the
generating function of $B_n(q;t)$ is given by
\begin{equation*}
\sum_{n\geqslant
0}B_n(q;t)x^n=\frac{1}{1-s_0(q)x-\cfrac{t_1(q)x^2}{1-s_1(q)x-\cfrac{t_2(q)x^2}{1-s_2(q)x-\cfrac{t_3(q)x^2}{1-s_3(q)x-\cdots}}}}.
\end{equation*}
Here $s_i(q)=(t+1)i+1+[(t+1)(i+1)-1]q$ and
$t_{i+1}(q)=[(t+1)(i+1)]^2q$ for $i\geqslant0.$

Thus the strong $q$-log-convexity of $B_n(q;t)$ follows from
Theorem~\ref{Sq-LCX}.
\begin{proposition}
The polynomials $B_n(q;t)$ form a strongly $q$-log-convex sequence
for $t\geqslant0$.
\end{proposition}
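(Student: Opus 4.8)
The plan is to recognize the exponential generating function (\ref{q-B}) as a specialization of the general form (\ref{exponential}) and then invoke Theorem~\ref{Sq-LCX} directly. First I would rewrite the right-hand side of (\ref{q-B}) as
$$\frac{(1-q)e^{x(1-q)}}{1-qe^{x(1-q)(1+t)}}=\left(\frac{(1-q)e^{a(1-q)x}}{1-qe^{d(1-q)x}}\right)^b$$
and read off the parameters by matching the three pieces: the outer exponent forces $b=1$; the numerator exponential forces $a(1-q)x=x(1-q)$, hence $a=1$; and the denominator exponential forces $d(1-q)x=x(1-q)(1+t)$, hence $d=1+t$. Thus $\{B_n(q;t)\}_{n\geqslant 0}=\{T_n(q)\}_{n\geqslant 0}$ with $(a,b,d)=(1,1,1+t)$.

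With this identification, Theorem~\ref{eo} immediately yields the continued fraction displayed above, whose coefficients $s_i(q)=(t+1)i+1+[(t+1)(i+1)-1]q$ and $t_{i+1}(q)=[(t+1)(i+1)]^2q$ are exactly the expressions (\ref{s-t}) evaluated at $(a,b,d)=(1,1,1+t)$. In particular, once $t\geqslant 0$ all $s_i(q)$ and $t_{i+1}(q)$ have nonnegative coefficients, so they are admissible inputs for the criterion of Theorem~\ref{Sq-LCX-Zhu}.

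It then remains only to check the hypotheses of Theorem~\ref{Sq-LCX}, namely $b\geqslant 0$ and $d\geqslant a\geqslant 0$. Here $b=1\geqslant 0$ and $a=1\geqslant 0$ hold unconditionally, while the condition $d\geqslant a$ reads $1+t\geqslant 1$, i.e.\ $t\geqslant 0$. Hence for every $t\geqslant 0$ the full set of hypotheses of Theorem~\ref{Sq-LCX} is satisfied, and the desired conclusion, that $\{B_n(q;t)\}_{n\geqslant 0}$ is strongly $q$-log-convex, follows at once.

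The only genuine content lies in the parameter matching of the first step; once $(a,b,d)=(1,1,1+t)$ is pinned down, the result is a direct corollary of Theorem~\ref{Sq-LCX}. Accordingly I do not expect any real obstacle. The single point meriting care is the observation that, among the three inequalities required by Theorem~\ref{Sq-LCX}, it is precisely the constraint $d\geqslant a$ that degenerates into the stated restriction $t\geqslant 0$, the remaining requirements $b\geqslant 0$ and $a\geqslant 0$ being automatic for this specialization.
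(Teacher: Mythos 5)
Your proposal is correct and coincides with the paper's own argument: the paper likewise specializes the general form~(\ref{exponential}) with $(a,b,d)=(1,1,1+t)$, obtains the continued fraction coefficients $s_i(q)=(t+1)i+1+[(t+1)(i+1)-1]q$ and $t_{i+1}(q)=[(t+1)(i+1)]^2q$ from Theorem~\ref{eo}, and concludes by Theorem~\ref{Sq-LCX}. Your observation that the only binding hypothesis is $d\geqslant a$, i.e.\ $1+t\geqslant 1$, exactly accounts for the stated restriction $t\geqslant 0$, just as in the paper.
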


Recently, Xiong, Tsao and Hall~\cite{XTH} defined the general
Eulerian numbers $A_{n,k}(a,d)$ associated with an arithmetic
progression $\{a,a+d,a+2d,a+3d,\ldots\}$ as
\begin{equation*}
A_{n,k}(a,d)=(-a+(k+2)d)A_{n-1,k}(a,d)+(a+(n-k-1)d)A_{n-1,k-1}(a,d),
\end{equation*}
where $A_{0,-1}=1$ and $A_{n,k}=0$ for $k\geqslant n$ or
$k\leqslant-2.$ In particular, when $a=d=1$, $A_{n,k}(1,1)=A_{n,k}$,
the classical Eulerian numbers which enumerating the number of $A_n$
with $k-1$ descents. Similarly, the general Eulerian polynomials
associated  with an arithmetic progression
$\{a,a+d,a+2d,a+3d,\ldots\}$ can be defined as
\begin{equation*}
P_n(q,a,d)=\sum_{k=-1}^{n-1}A_{n,k}(a,d)q^{k+1}.
\end{equation*}
It is shown that the exponential generating function of
$\{P_n(q,a,d)\}_{n\geqslant 0}$ has the following expression
\begin{equation}\label{g-A}
\sum_{n\geqslant0}P_n(q,a,d)\frac{x^n}{n!}=\frac{(1-q)e^{ax(1-q)}}{1-qe^{dx(1-q)}}.
\end{equation}
So from Theorem~\ref{eo}, the generating function of
$\{P_n(q,a,d)\}_{n\geqslant0}$ is given by
\begin{equation*}
\sum_{n\geqslant
0}P_n(q,a,d)x^n=\frac{1}{1-s_0(q)x-\cfrac{t_1(q)x^2}{1-s_1(q)x-\cfrac{t_2(q)x^2}{1-s_2(q)x-\cfrac{t_3(q)x^2}{1-s_3(q)x-\cdots}}}},
\end{equation*}
with $s_i(q)=(di+a)+(di+d-a)q$ and $t_{i+1}(q)=(d(i+1))^2q$ for
$i\geqslant0$ (see~\cite{Bar13} for instance).

Thus the strong $q$-log-convexity of $P_n(q,a,d)$ follows from
Theorem~\ref{Sq-LCX}.
\begin{proposition}
The general Eulerian polynomials $P_n(q,a,d)$ associated with an
arithmetic progression $\{a,a+d,a+2d,a+3d,\ldots\}$ form a strongly
$q$-log-convex sequence for $d\geqslant a\geqslant 1$.
\end{proposition}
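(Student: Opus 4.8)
The plan is to recognize that (\ref{g-A}) is nothing but the special case $b=1$ of the master generating function (\ref{exponential}) treated in Theorem~\ref{eo}, so that the proposition follows by specialization rather than by any fresh computation.

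First I would set $b=1$ in (\ref{exponential}). Since
$$\left(\frac{(1-q)e^{a(1-q)x}}{1-qe^{d(1-q)x}}\right)^{1}=\frac{(1-q)e^{ax(1-q)}}{1-qe^{dx(1-q)}},$$
the right-hand side coincides exactly with (\ref{g-A}), whence $P_n(q,a,d)=T_n(q)$ for every $n$ under the choice $b=1$. Substituting $b=1$ into the coefficient formulas (\ref{s-t}) then gives $s_i(q)=(di+a)+(di+d-a)q$ and $t_{i+1}(q)=d^2(i+1)^2q=(d(i+1))^2q$, which is precisely the data already recorded beneath the continued fraction for $P_n(q,a,d)$. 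Thus Theorem~\ref{eo} supplies the continued-fraction expansion (\ref{ordinary}) of $\sum_{n}P_n(q,a,d)x^n$ with these $s_i$ and $t_{i+1}$ at no extra cost.

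Finally I would check that the hypotheses of Theorem~\ref{Sq-LCX} hold for this choice of parameters. That theorem guarantees strong $q$-log-convexity whenever $b\geqslant 0$ and $d\geqslant a\geqslant 0$. Here $b=1\geqslant 0$, and the standing assumption $d\geqslant a\geqslant 1$ immediately forces $d\geqslant a\geqslant 0$, so both conditions are satisfied. Invoking Theorem~\ref{Sq-LCX} therefore yields the strong $q$-log-convexity of $\{P_n(q,a,d)\}_{n\geqslant 0}$, completing the argument.

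There is essentially no obstacle to overcome: all the analytic work---building the tri-diagonal production matrix, converting it to a continued fraction via the orthogonal-polynomial and moment machinery of Lemmas~\ref{production}--\ref{Bar-lemma}, and verifying the key inequality $s_i(q)s_{i+1}(q)\geqslant_q t_{i+1}(q)$---has already been carried out once and for all in Theorems~\ref{eo} and~\ref{Sq-LCX}. The only point worth flagging is that the hypothesis $a\geqslant 1$ stated here is stronger than the purely algebraic requirement $a\geqslant 0$ needed by Theorem~\ref{Sq-LCX}; it is presumably retained to keep the arithmetic progression $\{a,a+d,a+2d,\ldots\}$ in its combinatorially meaningful regime, but it plays no role in the proof beyond ensuring $a\geqslant 0$.
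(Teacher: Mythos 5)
Your proposal is correct and follows essentially the same route as the paper's own proof: the authors likewise treat (\ref{g-A}) as the $b=1$ specialization of (\ref{exponential}), read off $s_i(q)=(di+a)+(di+d-a)q$ and $t_{i+1}(q)=(d(i+1))^2q$ from Theorem~\ref{eo}, and conclude by invoking Theorem~\ref{Sq-LCX}, whose hypothesis $d\geqslant a\geqslant 0$ is implied by $d\geqslant a\geqslant 1$. Your observation that the stated condition $a\geqslant 1$ is stronger than what the argument actually needs is also accurate.
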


\section{Concluding remarks and open problems}
\hspace*{\parindent} Let $a_0,a_1,a_2,\ldots$ be a sequence of
nonnegative numbers. The sequence is called {\it log-convex}
(respectively {\it log-concave}) if for $k\geqslant 1$,
$a_k^2\leqslant a_{k-1}a_{k+1}$ (respectively $a_k^2\geqslant
a_{k-1}a_{k+1}$). Let $\{a(n,k)\}_{0\leqslant k\leqslant n}$ be a
triangular array of nonnegative numbers. Define a linear
transformation of sequences by
\begin{equation}\label{trans}z_n=\sum_{k=0}^na(n,k)x_k,\ \ \ n=0,1,2,\ldots.\end{equation} We say
that the linear transformation~(\ref{trans}) preserve log-convexity
if it preserves the log-convexity of sequences, i.e., the
log-convexity of $\{x_n\}$ implies that of $\{z_n\}$. We also say
that corresponding triangle $\{a(n,k)\}_{0\leqslant k\leqslant n}$
preserve log-convexity.
 Liu and
Wang~\cite{LW07-2} obtained the binomial transformation, the
Stirling transformations of the first and second kind preserve
log-convexity respectively. They also proposed the following
conjecture, which is still open now.
\begin{conjecture}[{\cite{LW07-2}}]
The Eulerian transformation $z_n=\sum_{k=0}^nA_{n,k}x_k$ preserve
log-convexity.
\end{conjecture}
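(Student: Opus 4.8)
The plan is to reduce the conjecture to a single family of quadratic inequalities in the input and then to analyze those using the Eulerian-number data assembled earlier in the paper. Writing $z_n=\sum_{k=0}^nA_{n,k}x_k$, the assertion is that $z_{n-1}z_{n+1}-z_n^2\geqslant0$ for every $n\geqslant1$ whenever $\{x_k\}$ is log-convex. Expanding and symmetrizing gives
$$z_{n-1}z_{n+1}-z_n^2=\sum_{0\leqslant i\leqslant j}c^{(n)}_{i,j}\,x_ix_j,\qquad c^{(n)}_{i,j}=A_{n-1,i}A_{n+1,j}+A_{n-1,j}A_{n+1,i}-2A_{n,i}A_{n,j}\ \ (i<j),$$
with $c^{(n)}_{i,i}=A_{n-1,i}A_{n+1,i}-A_{n,i}^2$. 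The only property of the input I may use is log-convexity, and the convenient form of it is the \emph{spreading inequality}: for a positive log-convex sequence and indices $i\leqslant j$ with $i\geqslant1$ one has $x_{i-1}x_{j+1}\geqslant x_ix_j$, i.e.\ pushing a pair of indices apart while fixing their sum can only increase the product. The task is therefore to exhibit the quadratic form $\sum c^{(n)}_{i,j}x_ix_j$ as a nonnegative combination of the elementary spreading differences $x_{i-1}x_{j+1}-x_ix_j$.

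A first structural warning shapes the whole attack: the diagonal coefficients are \emph{nonpositive}. Indeed, for fixed $k$ the column $\{A_{n,k}\}_n$ of the Eulerian triangle is log-concave rather than log-convex (for instance $A_{3,1}^2=16>11=A_{2,1}A_{4,1}$), so $c^{(n)}_{i,i}\leqslant0$ in general. Hence the form is never entrywise nonnegative, and its positivity must come entirely from the interplay between the negative diagonal and the off-diagonal terms, mediated by the log-convex weighting. The most natural route to organize this is induction on $n$ through the defining recurrence $A_{n,k}=(k+1)A_{n-1,k}+(n-k)A_{n-1,k-1}$, which splits $z_n$ into a ``weight by $k+1$'' transform and a ``weight by $n-k$ and shift'' transform, mirroring how Liu and Wang~\cite{LW07-2} treated the binomial and Stirling triangles. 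The hard part will be the coefficient $n-k$: unlike the Stirling and binomial cases its weight is not a function of the column index alone, so the induction does not close within the class of Eulerian transforms and the two summands of the split are not separately log-convex.

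The second natural ingredient is the strong $q$-log-convexity of $\{A_n(q)\}$ proved above via Theorem~\ref{Sq-LCX}. Reading off the coefficient of $q^m$ in $A_{n-1}(q)A_{n+1}(q)\geqslant_q A_n(q)^2$ shows exactly that every \emph{anti-diagonal sum} of the array $c^{(n)}$ is nonnegative, $\sum_{i+j=m,\ i\leqslant j}c^{(n)}_{i,j}\geqslant0$. Equivalently, this result already settles the conjecture for every geometric (log-linear) input $x_k=t^k$, and does so coefficientwise. The obstruction is that general log-convex sequences span a far larger cone than the geometric ones --- log-convexity is a single $2\times2$ determinant condition, strictly weaker than being a Stieltjes moment sequence --- so anti-diagonal-sum nonnegativity, which is all that $q$-log-convexity supplies, does not by itself control the full two-dimensional array against an arbitrary log-convex weighting. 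Closing this gap, i.e.\ upgrading the anti-diagonal information to a genuinely entrywise rearrangement matching the spreading inequalities, is the main obstacle and is precisely why the conjecture remains open. A promising but unproven avenue is to seek a Lindstr\"om--Gessel--Viennot or total-positivity statement for a suitably bordered Eulerian array that manufactures the required entrywise combinations directly; one should however be cautious, since the known total positivity of the Eulerian triangle is exactly what preserves log-\emph{concavity}, not log-convexity, so a genuinely new input is needed.
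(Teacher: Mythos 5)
You were asked to prove a statement that the paper itself does not prove: this is Liu and Wang's conjecture, quoted from \cite{LW07-2}, and the authors state explicitly that it ``is still open now.'' There is therefore no proof in the paper to compare against, and your submission, correctly, does not claim one --- it is an obstruction analysis that ends by conceding the key step. Judged on that basis, your partial content is accurate. The reduction of log-convexity preservation to nonnegativity of the quadratic form $\sum_{i\leqslant j}c^{(n)}_{i,j}x_ix_j$ on the cone of log-convex inputs is the right normal form; the spreading inequality $x_{i-1}x_{j+1}\geqslant x_ix_j$ is the correct way to use the hypothesis (with the routine caveat that one should assume the $x_k$ positive, or handle zeros separately). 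Your numerical warning is correct: with the classical indexing $A_{2,1}=1$, $A_{3,1}=4$, $A_{4,1}=11$, so $A_{3,1}^2=16>11=A_{2,1}A_{4,1}$ and the diagonal coefficients $c^{(n)}_{i,i}$ can indeed be negative, which rules out any entrywise-positivity shortcut. Your reading of the paper's own contribution is also exactly right: the strong $q$-log-convexity of $\{A_n(q)\}$ (Theorem~\ref{Sq-LCX} applied via the continued fraction~(\ref{ordinary})) says precisely that the anti-diagonal sums $\sum_{i+j=m,\,i\leqslant j}c^{(n)}_{i,j}$ are nonnegative, which settles the conjecture for geometric inputs $x_k=t^k$, $t\geqslant0$, but nothing stronger, since general log-convex sequences form a much larger cone than geometric ones.

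The one genuine gap --- which you name yourself --- is the upgrade from anti-diagonal-sum nonnegativity to a decomposition of the form into nonnegative multiples of the spreading differences $x_{i-1}x_{j+1}-x_ix_j$; equivalently, nonnegativity of the partial sums $\sum_{i+j=m,\,i\leqslant j,\,j-i\geqslant r}c^{(n)}_{i,j}$ for all $r$, not just $r=0$. Neither the paper's continued-fraction machinery nor Zhu's criterion \cite{Zhu} yields this, and your diagnosis of why the inductive route of \cite{LW07-2} fails (the weight $n-k$ in the recurrence $A_{n,k}=(k+1)A_{n-1,k}+(n-k)A_{n-1,k-1}$ depends on $n$, so the induction does not close within a fixed class of column-weighted transforms, unlike the binomial and Stirling cases) matches the recognized difficulty. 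Your closing caution is also well placed: total positivity of the Eulerian triangle is the mechanism behind log-concavity preservation and points in the wrong direction here. In short: your assessment is sound and consistent with the paper's treatment of the statement as open; had you claimed a proof, the unproved entrywise-rearrangement step would have been the fatal gap.
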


Similarly, we can raise the following problem related to the
Eulerian polynomials of type $B_n$.
\begin{conjecture}
Let
\begin{equation}\label{B}z_n=\sum_{k=0}^nB_{n,k}x_k\end{equation}
denote the Eulerian transformation of type $B_n$. Then the
transformation (\ref{B}) preserves log-convexity.
\end{conjecture}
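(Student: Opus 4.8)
The plan is to attack this along the same lines as the classical (type $A$) Eulerian transformation conjecture, while exploiting the extra structure that the type $B$ array enjoys. Writing $\Delta_n:=z_{n-1}z_{n+1}-z_n^2$, the object to control is the quadratic form
$$\Delta_n=\sum_{i,j}\bigl(B_{n-1,i}B_{n+1,j}-B_{n,i}B_{n,j}\bigr)x_ix_j$$
in the input sequence. Since $\{x_k\}$ is log-convex, the only elementary inequalities at our disposal are the \emph{log-convexity defects} $x_{i-1}x_{j+1}-x_ix_j\geqslant0$ for $i\leqslant j$ (which follow from the monotonicity of the ratios $x_{k+1}/x_k$). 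So the goal is to rewrite $\Delta_n$ as a nonnegative linear combination of these defects, with coefficients assembled from the numbers $B_{n,k}$.

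First I would use the recurrence~(\ref{recurrenc-1}) to peel off one row, obtaining $z_n=\sum_k B_{n-1,k}\bigl[(2k+1)x_k+(2n-2k-1)x_{k+1}\bigr]$, and substitute the resulting two-row expressions into $\Delta_n$. After collecting terms this reduces the problem to a family of quadratic ``Hankel--Eulerian'' identities for the type $B$ numbers, namely controlling $B_{n-1,i}B_{n+1,j}-B_{n,i}B_{n,j}$ summed against the $x$'s. The symmetry of the type $B$ recurrence — the weights $2k+1$ and $2n-2k+1$ are mirror images under $k\mapsto n-k$, matched by the palindromicity $B_{n,k}=B_{n,n-k}$ — is a genuine advantage here, and I would look for a sign-reversing involution on pairs $(i,j)$ that realizes the regrouped sum as a manifestly nonnegative combination of defects, or, failing that, an explicit determinantal evaluation of the relevant $2\times2$ minors.

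A second, more structural route is to upgrade the statement to one about total positivity. The array $[B_{n,k}]$ has real-rooted (hence log-concave) rows, and Theorem~\ref{Sq-LCX} shows that the row polynomials $P(B_n,q)$ are strongly $q$-log-convex; one would try to combine these into a factorization of $[B_{n,k}]$ into elementary totally positive bidiagonal factors, each of which preserves log-convexity, so that their product does too. Alternatively, the very clean continued fraction for $P(B_n,q)$, with $s_i(q)=(2i+1)(1+q)$ and $t_{i+1}(q)=4(i+1)^2q$, invites proving instead that the transformation preserves the \emph{Stieltjes moment property}: since any Stieltjes moment sequence is log-convex by Cauchy--Schwarz, this would yield the conjecture on the subclass of moment inputs, and the orthogonal-polynomial machinery of Section~2 is well suited to that restricted problem. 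One could also try to verify directly the hypotheses of a Liu--Wang-type criterion~\cite{LW07-2} (the binomial case being the Davenport--P\'olya Theorem), though the known such criteria do not appear to cover this array.

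The hard part will be exactly the step where the cross terms in $\Delta_n$ must be matched against the log-convexity defects: this is the same obstruction that keeps the type $A$ version open, and the pleasant features of the rows (real-rootedness together with $q$-log-convexity of the row polynomials) are \emph{not} by themselves sufficient, since the type $A$ array shares both. My best guess for a tractable entry point is the involution adapted to the $k\mapsto n-k$ symmetry, precisely because that symmetry is special to type $B$ and is what might let the $B$ case fall before its type $A$ cousin.
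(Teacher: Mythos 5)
The statement you were asked to prove is not a theorem of this paper at all: it appears in Section 4, explicitly titled ``Concluding remarks and open problems,'' as a conjecture posed in analogy with Liu and Wang's still-open conjecture for the classical (type $A$) Eulerian transformation, and the paper offers no proof of it. So there is no proof of record to compare against; the only question is whether your attempt settles the problem, and it does not. Your first reduction is sound as far as it goes --- using the recurrence~(\ref{recurrenc-1}) to write $z_n=\sum_k B_{n-1,k}\bigl[(2k+1)x_k+(2n-2k-1)x_{k+1}\bigr]$ is a correct reindexing --- but the decisive step, exhibiting $z_{n-1}z_{n+1}-z_n^2$ as a nonnegative combination of the defects $x_{i-1}x_{j+1}-x_ix_j$ via the hoped-for sign-reversing involution, is never carried out. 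You concede as much yourself: this is precisely the obstruction that keeps the type $A$ version open, and a plan that terminates at the known obstruction is a research program, not a proof. The appeal to the $k\mapsto n-k$ palindromic symmetry of $B_{n,k}$ is a plausible heuristic for why type $B$ might be more tractable, but no mechanism is given by which that symmetry produces the needed signs.

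The fallback routes also have concrete holes. The bidiagonal-factorization route would not suffice even if the factorization were exhibited: an elementary bidiagonal factor acts on sequences as $z_n=x_n+c_nx_{n-1}$ with $n$-dependent coefficients, and while the constant-coefficient case $c_n\equiv c\geqslant 0$ does preserve log-convexity (the cross term reduces to $c(x_{n-2}x_{n+1}-x_{n-1}x_n)\geqslant 0$), this computation breaks when $c_n$ varies, so the claim that ``each factor preserves log-convexity, hence the product does'' is itself an unproved lemma --- total positivity of a triangle does not in general imply that the associated transformation preserves log-convexity. The Stieltjes-moment route proves at best the conjecture restricted to moment-sequence inputs; since a log-convex sequence need not be a moment sequence (log-convexity is only the first tier of the Hankel positivity hierarchy), this cannot yield the full statement~(\ref{B}). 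Finally, as you correctly note, the strong $q$-log-convexity of $P(B_n,q)$ established in Theorem~\ref{Sq-LCX} concerns the rows paired against the single geometric input $x_k=q^k$ and is not sufficient for the transformation statement, since the type $A$ array enjoys the same property while its conjecture remains open. In short: the conjecture remains open, and your proposal, though a sensible and well-informed plan of attack, contains no completed argument.
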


\section{Appendix}
\hspace*{\parindent} The {\it exponential Riordan
array}~\cite{Bar11-2,DFR,DS} denoted by $L=[g(x),f(x)]$, is an
infinite lower triangular matrix whose exponential generating
function of the $k$th column is $g(x)(xf(x))^k/k!$ for
$k=0,1,2,\ldots$, where $g(0)\neq0\neq f(0)$. An exponential Riordan
array $L=(l_{i,j})_{i,j\geqslant0}$ can also be characterized by two
sequences $\{c_n\}_{n\geqslant0}$ and $\{r_n\}_{n\geqslant0}$ such
that $$l_{0,0}=1,\ \ l_{i+1,0}=\sum_{j\geqslant0}j!c_jl_{i,j},\ \
l_{i+1,j}=\frac{1}{j!}\sum_{k\geqslant
j-1}k!(c_{k-j}+jr_{k-j+1})l_{i,j},$$ for $i,j\geqslant0$
(see~\cite{DFR} for instance). Call $\{c_n\}_{n\geqslant0}$ and
$\{r_n\}_{n\geqslant0}$ the $c-$ and $z-$ sequences of $L$
respectively. Associated to each exponential Riordan array
$L=[g(x),f(x)]$, there is a matrix $P=(p_{i,j})_{i,j\geqslant0}$,
called the {\it production matrix}, whose bivariate generating
function is given by $$e^{xy}[c(x)+r(x)y],$$ where
$$c(x)=\frac{g'(\bar{f}(x))}{g(\bar{f}(x))}:=\sum_{n\geqslant0}
c_nx^n,r(x)=f'(\bar{f}(x)):=\sum_{n\geqslant0} r_nx^n.$$ Deutsch
{\it et al.}~\cite{DFR} obtained the elements of production matrix
$P=(p_{i,j})_{i,j\geqslant0}$ satisfying
$$p_{i,j}=\frac{i!}{j!}(c_{i-j}+jr_{i-j+1}).$$
Assume that $c_{-1}=0.$ Note that $$P=L^{-1}\bar{L},
\bar{I}=\bar{L}L^{-1},$$ where $\bar{L}$ is obtained from $L$ with
the first row removed and
$\bar{I}=(\delta_{i+1,j})_{i,j\geqslant0}$, where $\delta_{i,j}$ is
the usual Kronecker symbol.

The following well-known results establish the
relationship among the orthogonal polynomials, three-term
recurrences, recurrence coefficients and the continued fraction of
the generating function of the moment sequence. The first result is
the well-known "Favard's Theorem".
\begin{theorem}[{\cite[Th\'eor\'eme~9 on
p. I-4]{Vie}}, or~{\cite[Theorem~50.1]{Wal}}]\label{Bar-1} Let
$\{p_n(x)\}_{n\geqslant0}$ be a sequence of monic polynomials with
degree $n=0,1,2,\ldots$ respectively. Then the sequence
$\{p_n(x)\}_{n\geqslant0}$ is (formally) orthogonal if and only if
there exist sequences $\{\alpha_n\}_{n\geqslant0}$ and
$\{\beta_n\}_{n\geqslant 1}$ with $\beta_n\neq0$ such that the
three-term recurrence
$$p_{n+1}(x)=(x-\alpha_n)p_n(x)-\beta_np_{n-1}(x)$$ holds, for
$n\geqslant1$, with initial conditions $p_0(x)=1$ and
$p_1(x)=x-\alpha_0.$
\end{theorem}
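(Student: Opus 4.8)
The statement is the classical Favard theorem, so I would treat the two implications separately, keeping fixed a linear functional $\mathcal{L}$ and reading ``orthogonal'' as $\mathcal{L}[p_mp_n]=0$ for $m\neq n$ together with $\mathcal{L}[p_n^2]\neq0$. For the ``only if'' direction, I would start from the fact that, being monic of strictly increasing degrees, the polynomials $\{p_n(x)\}_{n\geqslant0}$ form a basis of the polynomial ring, so I may expand $xp_n(x)=\sum_{k=0}^{n+1}c_{n,k}p_k(x)$ with $c_{n,n+1}=1$ coming from the leading coefficient. Multiplying by $p_k$ and applying $\mathcal{L}$ gives $c_{n,k}=\mathcal{L}[xp_np_k]/\mathcal{L}[p_k^2]$, using $\mathcal{L}[p_k^2]\neq0$. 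Since $xp_k$ has degree $k+1$, the numerator vanishes whenever $k+1<n$, so $c_{n,k}=0$ for $k\leqslant n-2$ and the expansion collapses to $xp_n=p_{n+1}+\alpha_np_n+\beta_np_{n-1}$, which is the desired recurrence with $\alpha_n:=c_{n,n}$ and $\beta_n:=c_{n,n-1}$. To see $\beta_n\neq0$ I would note that $xp_{n-1}=p_n+\cdots$ forces $\mathcal{L}[xp_np_{n-1}]=\mathcal{L}[p_n^2]\neq0$, whence $\beta_n=\mathcal{L}[xp_np_{n-1}]/\mathcal{L}[p_{n-1}^2]\neq0$.

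For the ``if'' direction, which carries the real content, I would manufacture the functional directly from the recurrence coefficients: define $\mathcal{L}$ on the basis $\{p_n\}$ by $\mathcal{L}[p_0]=\mu_0\neq0$ and $\mathcal{L}[p_n]=0$ for $n\geqslant1$, extended linearly. Because a polynomial of degree $m$ is a combination of $p_0,\dots,p_m$, proving orthogonality reduces to the single assertion $\mathcal{L}[x^jp_n]=0$ for all $0\leqslant j<n$. The step I would highlight is to prove this by induction on the power $j$ rather than on the index $n$: writing $x^jp_n=x^{j-1}(xp_n)$ and substituting $xp_n=p_{n+1}+\alpha_np_n+\beta_np_{n-1}$ turns the claim into $\mathcal{L}[x^{j-1}p_{n+1}]+\alpha_n\mathcal{L}[x^{j-1}p_n]+\beta_n\mathcal{L}[x^{j-1}p_{n-1}]$, and each of the three indices $n+1,n,n-1$ exceeds $j-1$ precisely because $n>j$, so the induction hypothesis annihilates all three terms. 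Once orthogonality is established, the same substitution applied to $\mathcal{L}[p_n^2]=\mathcal{L}[x^np_n]$ yields the recursion $\mathcal{L}[p_n^2]=\beta_n\mathcal{L}[p_{n-1}^2]$, hence $\mathcal{L}[p_n^2]=\mu_0\prod_{i=1}^n\beta_i\neq0$ by the hypothesis $\beta_i\neq0$, completing the verification that $\{p_n\}$ is (formally) orthogonal with respect to $\mathcal{L}$.

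The main obstacle is exactly this ``if'' direction, and in fact a naive organization of the induction is what fails. Inducting on $n$ relates the off-diagonal value $\mathcal{L}[x^np_{n+1}]$ only to the super-diagonal value $\mathcal{L}[x^{n+1}p_n]$ and then back to itself, producing a tautology that never pins either quantity down; the circularity is broken only by running the induction along the monomial degree $j$, where multiplication by $x$ followed by the recurrence strictly lowers $j$ while keeping every polynomial index large enough for the hypothesis to apply. I would also stay alert to the word ``formally'': the functional need not be positive, so I may invoke only the nonvanishing of $\mathcal{L}[p_n^2]$ and never its sign, which is why the hypothesis $\beta_n\neq0$ (rather than $\beta_n>0$) is the exact input the argument consumes.
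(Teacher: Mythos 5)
Your proposal is correct, but there is nothing in the paper to compare it against: the paper states this result (Favard's theorem) in its Appendix purely as a quoted classical theorem, with citations to Viennot and Wall, and supplies no proof of its own --- it only \emph{uses} the statement, in Lemma 2.2, to recognize the polynomials $Q_n(x)$ built from the tridiagonal production matrix as an orthogonal family. Your argument is the standard one from the cited sources and it holds up under scrutiny: the ``only if'' half is the usual expansion $xp_n=\sum_{k\leqslant n+1}c_{n,k}p_k$ with $c_{n,k}=\mathcal{L}[xp_np_k]/\mathcal{L}[p_k^2]$ killed for $k\leqslant n-2$ by degree count, and the identification $\mathcal{L}[xp_np_{n-1}]=\mathcal{L}[p_n^2]\neq0$ correctly forces $\beta_n\neq0$; the ``if'' half constructs the moment functional by $\mathcal{L}[p_0]=\mu_0$, $\mathcal{L}[p_n]=0$ for $n\geqslant1$, and your decision to induct on the monomial exponent $j$ in $\mathcal{L}[x^jp_n]=0$ (for $j<n$) rather than on $n$ is exactly right --- as you note, inducting on $n$ chases $\mathcal{L}[x^np_{n+1}]$ and $\mathcal{L}[x^{n+1}p_n]$ around in a circle, while descending in $j$ keeps all three indices $n+1,n,n-1$ above $j-1$ so the hypothesis applies to every term. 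The closing computation $\mathcal{L}[p_n^2]=\mathcal{L}[x^np_n]=\beta_n\mathcal{L}[p_{n-1}^2]=\mu_0\prod_{i=1}^n\beta_i\neq0$ is also the right way to see why $\beta_n\neq0$ (and not positivity) is the exact hypothesis consumed in the formal setting, which matches the paper's use of the theorem where the recurrence coefficients $t_{i+1}(q)=d^2(i+1)(i+b)q$ are polynomials rather than positive reals. One pedantic point you glossed over but which causes no trouble: your induction tacitly uses $xp_0=p_1+\alpha_0p_0$, which is not part of the stated recurrence (that starts at $n\geqslant1$) but follows immediately from the initial conditions $p_0=1$, $p_1=x-\alpha_0$; in fact your inductive step only ever invokes the recurrence for $n\geqslant1$, so the argument is airtight as written.
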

\begin{theorem}[{\cite[Propersition~1 (7) on
p. V-5]{Vie}}, or~{\cite[Theorem~51.1]{Wal}}]\label{Bar-2} Let
$\{p_n(x)\}_{n\geqslant0}$ be a sequence of monic polynomials, which
is orthogonal with respect to some linear functional $\mathcal{L}$.
For $n\geqslant1$, let
$$p_{n+1}(x)=(x-\alpha_n)p_n(x)-\beta_np_{n-1}(x),$$ be the corresponding three-term recurrence which is guaranted by
Favard's theorem. Then the generating function
$$h(x)=\sum_{k=0}^\infty\mu_kx^k$$ for the moments
$\mu_k=\mathcal{L}(x^k)$ satisfies
\begin{equation*}
h(x)=\frac{\mu_0}{1-\alpha_0x-\cfrac{\beta_1x^2}{1-\alpha_1x-\cfrac{\beta_2x^2}{1-\alpha_2x-\cfrac{\beta_3x^2}{1-\alpha_3x-\cdots}}}}.
\end{equation*}
\end{theorem}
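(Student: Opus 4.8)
The plan is to identify the successive convergents of the right-hand continued fraction with explicit rational approximants of the formal power series $h(x)=\sum_{k\ge0}\mu_k x^k$, and to pin down the order to which they agree with $h(x)$ by means of the orthogonality relations $\mathcal{L}(t^j p_n(t))=0$ for $0\le j\le n-1$. Everything is carried out in the ring of formal power series in $x$, so that ``convergence'' of the continued fraction means coefficientwise stabilization and no analytic input is required; the overall factor $\mu_0$ enters only through the constant term $h(0)=\mu_0$.

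First I would introduce the denominator polynomials $B_n(x)$ of the convergents via the three-term recurrence attached to the $J$-fraction,
\begin{equation*}
B_n(x)=(1-\alpha_{n-1}x)B_{n-1}(x)-\beta_{n-1}x^2B_{n-2}(x),\qquad B_0(x)=1,\quad B_1(x)=1-\alpha_0x.
\end{equation*}
Replacing $x$ by $1/x$ in the given recurrence $p_n(x)=(x-\alpha_{n-1})p_{n-1}(x)-\beta_{n-1}p_{n-2}(x)$ and multiplying by $x^n$ shows at once that the reversed polynomials $x^n p_n(1/x)$ satisfy this same recurrence with the same initial data; hence $B_n(x)=x^n p_n(1/x)$. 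Since each $p_n$ is monic, $B_n(0)=[y^n]p_n(y)=1$, so every $B_n$ is invertible as a power series.

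The heart of the matter is an order computation for $B_n(x)h(x)$. Writing $B_n(x)=\sum_{i=0}^n b_i x^i$ with $b_i=[y^{n-i}]p_n(y)$, one finds for $m\ge n$ that
\begin{equation*}
[x^m]\big(B_n(x)h(x)\big)=\sum_{i=0}^{n}b_i\,\mu_{m-i}=\mathcal{L}\big(t^{m-n}p_n(t)\big),
\end{equation*}
because $\mu_k=\mathcal{L}(t^k)$ and $\sum_{j}([y^j]p_n)\,t^j=p_n(t)$. By orthogonality this coefficient vanishes for $n\le m\le 2n-1$. Defining the numerator $A_n(x)$ to be the truncation of $B_n(x)h(x)$ to degrees less than $n$ (a polynomial of degree at most $n-1$), we obtain $B_n(x)h(x)-A_n(x)=O(x^{2n})$, and therefore, since $B_n(0)=1$,
\begin{equation*}
h(x)-\frac{A_n(x)}{B_n(x)}=O(x^{2n}).
\end{equation*}

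It remains to recognize $A_n(x)/B_n(x)$ as the genuine $n$-th convergent $C_n(x)$ of the stated continued fraction. The convergents are $C_n=\tilde A_n/B_n$, where $\tilde A_n$ obeys the same recurrence as $B_n$ with $\tilde A_0=0$ and $\tilde A_1=\mu_0$; the claim is that the truncation $A_n$ satisfies this very recurrence, so that $A_n=\tilde A_n$. Multiplying the $B$-recurrence by $h$ and truncating to degree less than $n$, the only obstruction to commuting the truncation with the recurrence is a single term proportional to $[x^{n-1}](B_{n-1}(x)h(x))=\mathcal{L}(p_{n-1})$, which vanishes by orthogonality; this is exactly the delicate bookkeeping step, and I expect it to be the main obstacle. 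Once $A_n/B_n=C_n$ is established, the relation $h-C_n=O(x^{2n})$ shows simultaneously that the convergents stabilize coefficientwise (each $[x^k]C_n$ is constant for $2n>k$) and that their formal limit is $h$; that limit is by definition the value of the continued fraction, which proves the theorem. A clean alternative that bypasses the numerator identification is the resolvent route: form the tridiagonal Jacobi matrix $J$ with diagonal $(\alpha_i)$, superdiagonal entries $1$ and subdiagonal entries $(\beta_i)$ --- precisely the production matrix $P$ of the present paper --- verify $\mu_k=\mu_0\,(e_0^{\top}J^k e_0)$, and expand the $(0,0)$ entry of $(I-xJ)^{-1}$ by iterated Schur complements, which reproduces the $J$-fraction directly.
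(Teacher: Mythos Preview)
The paper does not actually supply a proof of this statement: it is quoted in the Appendix as a classical result, with references to Viennot and Wall, and is used as a black box in the body of the paper. Your argument is therefore not being compared against anything in the paper, but it is a correct independent proof. The identification $B_n(x)=x^n p_n(1/x)$, the order computation $B_n(x)h(x)-A_n(x)=O(x^{2n})$ via $\mathcal{L}(t^{m-n}p_n(t))=0$ for $n\le m\le 2n-1$, and the verification that the truncated numerators obey the convergent recurrence all go through as you outline; the ``obstruction'' term $[x^{n-1}](B_{n-1}h)=\mathcal{L}(p_{n-1})$ does vanish for $n\ge 2$, and one checks readily that this is the only coefficient that could spoil the commutation of truncation with the recurrence (the $x^2 B_{n-2}h$ term never reaches the dangerous degrees). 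What you have written is essentially the classical route found in Wall's book, and your Schur-complement alternative is the standard operator-theoretic variant; either would serve as a self-contained proof if the paper wanted to include one.
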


\bibliographystyle{amsplain}

\end{document}